\newtheorem{theorem}{Theorem}[section]
\newtheorem{lemma}[theorem]{Lemma}
\newtheorem{definition}[theorem]{Definition}
\newtheorem{remark}{Remark}
\def \d {\mathrm{d}}
\theoremstyle{thmstyleone}
\theoremstyle{thmstyletwo}
\theoremstyle{thmstylethree}
\begin{document}

\title[Finite-time stabilization and impulse control of heat equation]{Finite-time stabilization and impulse control of heat equation with dynamic boundary conditions}


\author[1]{\fnm{Salah-Eddine} \sur{Chorfi}}\email{chorphi@gmail.com}

\author[2]{\fnm{Ghita} \sur{El Guermai}}\email{ghita.el.guermai@gmail.com}
\author[3]{\fnm{Lahcen} \sur{Maniar}}\email{maniar@uca.ma}
\author*[4]{\fnm{Walid} \sur{Zouhair}}\email{walid.zouhair.fssm@gmail.com}

\affil[1]{\orgdiv{LMDP, UMMISCO (IRD-UPMC)}, \orgname{Cadi Ayyad University, Faculty of Sciences Semlalia}, \orgaddress{ \city{Marrakesh}, \postcode{B.P. 2390}, \country{Morocco}}}


\abstract{In this paper, we study the impulse controllability of a multi-dimensional heat equation with dynamic boundary conditions in a bounded smooth domain. Using a recent approach based on finite-time stabilization, we show that the system is impulse null controllable at any positive time via impulse controls supported in a nonempty open subset of the physical domain. Furthermore, we infer an explicit estimate for the exponential decay of the solution. The proof of the main result combines a logarithmic convexity estimate and some spectral properties associated to dynamic boundary conditions. In our setting, the nature of the equations, which couple intern-boundary phenomena, makes it necessary to go into quite sophisticated estimates incorporating several boundary terms.}

\keywords{Heat equation, dynamic boundary conditions, finite-time stabilization, impulse control}
\pacs[MSC Classification]{93C27, 93C20, 35K05, 35R12, 35B40}

\maketitle

\section{Introduction}\label{sec1}
We are concerned with the impulse null controllability of heat equation subject to dynamic boundary conditions in a bounded domain. It consists of steering the solution to zero at a given positive time by stabilizing in a finite-time the system via impulse controls.

Throughout the paper, let $T>0$ be a fixed positive time and $\Omega\subset \mathbb{R}^N$ ($N \ge 2$) be a bounded smooth domain with boundary $\Gamma$ of class $C^2$. We are interested in the following impulse controlled system
\begin{empheq}[left = \empheqlbrace]{alignat=2} \label{1.1}
\begin{aligned}
&\partial_{t} \psi-\Delta \psi=0, && \qquad\text { in } \Omega \times(0, T) \setminus\displaystyle \bigcup_{k\geq 0 }\{\tau_{k}\},\\
&\psi(\cdot, \tau_{k})=\psi\left(\cdot, \tau_{k}^{-}\right)+\mathds{1}_{\omega} \mathcal{L}_{k}(\psi(\cdot,t_{k})), && \qquad\text { in } \Omega,\\
&\partial_{t}\psi_{\Gamma} - \Delta_{\Gamma} \psi_{\Gamma} + \partial_{\nu}\psi =0, && \qquad\text { on } \Gamma \times(0, T)\setminus\displaystyle \bigcup _{k\geq 0 }\{\tau_{k}\}, \\
&\psi_{\Gamma}(\cdot, \tau_{k})=\psi_{\Gamma}\left(\cdot, \tau_{k}^{-}\right), && \qquad\text { on } \Gamma,\\
& \psi_{\Gamma}(x,t) = \psi_{\mid\Gamma}(x,t), &&\qquad\text{ on } \Gamma \times(0, T) , \\
& \left(\psi(\cdot, 0),\psi_{\Gamma}(\cdot, 0)\right)=\left(\psi^{0},\psi^{0}_{\Gamma}\right), && \qquad \text{ on } \Omega\times\Gamma,
\end{aligned}
\end{empheq}
where $\left(\psi^{0},\psi^{0}_{\Gamma}\right)\in \mathbb{L}^2$ denotes the initial condition, $\displaystyle\lbrace t_{k}\rbrace_{k \ge 0}\,$ is an increasing sequence of positive real numbers and $\tau_{k} := \frac{t_{k} + t_{k+1} }{2} $, $\psi(\cdot,\tau_{k}^{-})$ denotes the left limit of the function $\psi$ at time $\tau_{k} \in (0,T)$,  $\omega\Subset \Omega$ is a nonempty open subset, and $\mathds{1}_\omega$ stands for the characteristic function of $\omega$, $\mathcal{L}_{k}$ is a linear bounded operator that we will define later. We also denote by $\psi_{\mid\Gamma}$ the trace of $\psi$, and by $\partial_{\nu} \psi:=(\nabla \psi \cdot \nu)_{\mid\Gamma}$ the normal derivative, where $\nu$ stands for the unit exterior field normal to $\Gamma$. The tangential gradient will be denoted as $\nabla_{\Gamma} \psi=\nabla \psi-\left(\partial_{\nu} \psi\right) \nu$. Let $\mathrm{g}$ denote the standard Riemannian metric on $\Gamma$ induced by $\mathbb{R}^N$. The Laplace-Beltrami operator $\Delta_\Gamma$ is defined locally as follows
\begin{equation*}
\Delta_\Gamma=\frac{1}{\sqrt{\mid\mathrm{g}\mid}} \sum_{i,j=1}^{N-1} \frac{\partial}{\partial x^i} \left(\sqrt{\mid\mathrm{g}\mid}\, \mathrm{g}^{ij} \frac{\partial}{\partial x^j}\right),\label{eqlb}
\end{equation*}
where $\mathrm{g}=\left(\mathrm{g}_{ij}\right)$ is the metric tensor, $\mathrm{g}^{-1}=\left(\mathrm{g}^{ij}\right)$ its inverse and $\mid\mathrm{g}\mid=\det\left(\mathrm{g}_{ij}\right)$. Finally, we will denote the Hessian matrix of $\psi$ with respect to $\mathrm{g}$ by $\nabla^2_\Gamma \psi$. In the sequel, we mainly use the following divergence formula
\begin{equation*}
\int_\Gamma \Delta_\Gamma u\, v \,\d S =- \int_\Gamma \langle \nabla_\Gamma u, \nabla_\Gamma v\rangle_\Gamma \,\d S, \quad u\in H^2(\Gamma), \, v\in H^1(\Gamma), \label{sdt}
\end{equation*}
where $\langle \cdot, \cdot \rangle_\Gamma$ is the Riemannian inner product of tangential vectors on $\Gamma$.

The boundary conditions of dynamic nature arise in several physical models of concrete applications, ranging from population dynamics, thermodynamics (heat transfer), fluid dynamics, etc. See for instance \citep{FH'11,La'32} and the references therein. One main feature of dynamic boundary conditions is that they derive naturally from the physical laws when one incorporates boundary conditions into the physical formulation of the problem under study. We refer to \citep{Go'06, Sa'20} for physical interpretations and mathematical derivations of several models with dynamic boundary conditions.

In the past few decades, many authors worked
on existence, stability, and controllability for impulsive dynamical systems; we mention e.g. \citep{Zouhair2022',SLAPWZ2,LHWZME2,Jose}. However, when it comes to impulsive controllability, the literature is not that broad, and not as many works are available in this area. We refer the interested reader to \citep{ABWZ,ka, MBEYR, YT}. The sharpest results are usually obtained by logarithmic convexity or spectral inequalities. The paper \citep{pkm} presents a Carleman commutator approach based on logarithmic convexity for some parabolic equations with homogeneous Dirichlet boundary conditions, while the recent paper \citep{RBKDP} investigates the observability for Neumann boundary conditions. In \citep{BP'18}, the authors have established a Lebeau–Robbiano-type spectral inequality for a degenerate one-dimensional elliptic operator with application to impulse control and finite-time stabilization for a degenerate parabolic equation. It should be pointed that this method is a new approach to steer the solution to zero by means of impulse control as a stabilizer in finite-time.

It is worth highlighting that the controllability results for non-impulsive parabolic equations have been extensively studied in the literature, see \citep{Coron2017, BAJS,CGHL18, DSMSM21} and the references therein, However, there are a few results that are available for the bulk-surface coupled systems. Recently, the controllability for non-impulsive parabolic equations with dynamic boundary conditions has been studied in \citep{BCMO'20, KM'19, MMS'17, MOZ2022}, and some related inverse problems are investigated in \citep{ACM'21, ACM'21', ACM'21'', ACM'22, ACMO'20, SEEGLMWZ22}. As for the impulse controllability for equations with dynamic boundary conditions, we have studied in \citep{CGMZ'22} the impulse approximate controllability for the multi-dimensional case. The one-dimensional system has been investigated numerically in \citep{CGMZ'221} where a constructive algorithm for computing the impulse control of minimal $L^2$-norm is presented.

In the present paper, we extend the recent strategy proposed in \citep{BP'18} for a scalar degenerate elliptic operator with the zero Dirichlet condition to a matrix operator on a product space to include the dynamics with respect to time at the boundary $\Gamma$.

Let us start with the following definition.
\begin{definition}
The system \eqref{1.1} is said to be finite-time stabilizable in time $T>0$, if for every $\left(\psi^{0},\psi^{0}_{\Gamma}\right)\in \mathbb{L}^2$ there exist control operators $\mathcal{L}_{k}$ such that the corresponding solution satisfies $\lim\limits_{t\to T^-} \Psi(t)=0$.
\end{definition}
Next, we state our main result which is a finite-time stabilization result via impulse controls for the heat equation with dynamic boundary conditions. The proof is given in Section \ref{sec4}. 
\begin{theorem}\label{thm1.4}
For $\tau_{k} = \frac{t_{k} + t_{k+1} }{2},$ with $t_{k}=T\left( 1-\dfrac{1}{b^k} \right)$ and $b>1$, the system \eqref{1.1} is finite-time stabilizable. Moreover, there exist constants $C,K > 0$ such that for any initial condition $\Psi_{0} = \left(\psi^{0},\psi^{0}_{\Gamma}\right)\in \mathbb{L}^2,$ the solution $\Psi = (\psi, \psi_{\Gamma})$ of the system \eqref{1.1} satisfies
\begin{equation*}
\|\Psi(t)\| \leq C \mathrm{e}^{-\frac{1}{K}\left(\frac{T}{T-t}\right)} \left\|\Psi_{0}\right\| \qquad \text { for any } 0 \leq t<T^{-}.
\end{equation*}
Furthermore,
$\displaystyle \lim _{k \rightarrow \infty}\left\|\mathcal{L}_{k}\left(\psi\left(\tau_{k}\right)\right)\right\|_{L^2(\omega)}=0.$
\end{theorem}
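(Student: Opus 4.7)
The plan is to follow the Lebeau--Robbiano-type strategy extended to impulse controls in \citep{BP'18}: combine a spectral inequality on finite sums of eigenfunctions with the parabolic dissipation of the high-frequency modes, using the impulses at $\tau_k$ to kill the low-frequency part and letting the free semigroup contract the rest. Let $\mathcal{A}$ be the realization on $\mathbb{L}^2 = L^2(\Omega)\times L^2(\Gamma)$ of the bulk--surface Laplacian governing the homogeneous counterpart of \eqref{1.1}; under the stated regularity of $\Gamma$, $-\mathcal{A}$ is self-adjoint, nonnegative and has compact resolvent, so it admits eigenpairs $(\lambda_j,\Phi_j)_{j\ge 1}$ with $\Phi_j=(\phi_j,\phi_{j,\Gamma})$ and an increasing sequence $\lambda_j\to\infty$. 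Write $\Pi_\Lambda$ for the orthogonal projector onto $E_\Lambda=\mathrm{span}\{\Phi_j:\lambda_j\le\Lambda\}$. Between two consecutive impulses the solution is $\mathrm{e}^{t\mathcal{A}}$ applied to its left-limit, so $\|(\mathrm{I}-\Pi_\Lambda)\mathrm{e}^{t\mathcal{A}}\Psi\|\le \mathrm{e}^{-\Lambda t}\|\Psi\|$.

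The crucial ingredient is a Lebeau--Robbiano spectral inequality: there exist $C,c>0$ so that
\begin{equation*}
\|\Psi\|_{\mathbb{L}^2}\le C\,\mathrm{e}^{c\sqrt{\Lambda}}\,\|\psi\|_{L^2(\omega)}\qquad\text{for every }\Psi=(\psi,\psi_\Gamma)\in E_\Lambda,\ \Lambda\ge 1.
\end{equation*}
I would obtain it from a logarithmic convexity estimate applied to the elliptic lift $U(x,s):=\sum_{\lambda_j\le\Lambda}c_j\Phi_j(x)\cosh(\sqrt{\lambda_j}\,s)$ in the cylinder $\Omega\times(-R,R)$, which solves a coupled elliptic problem carrying dynamic boundary conditions on $\Gamma\times(-R,R)$. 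A frequency-function/three-ball inequality for this matrix problem, followed by a propagation-of-smallness chain of balls from a point of $\omega\times\{0\}$ to $\Omega\times\{0\}$, yields the inequality, the factor $\mathrm{e}^{c\sqrt{\Lambda}}$ coming from the iteration and the explicit form of $U$ at $s=0$.

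Granted the spectral inequality, the impulses are constructed as follows. Choose $\Lambda_k$ growing like $b^{2k}$ (so that the dissipation exponent $\Lambda_k(\tau_{k+1}-\tau_k)\simeq \Lambda_k T/b^{k+1}$ dominates the next cost $c\sqrt{\Lambda_{k+1}}$). Using the spectral inequality and a standard HUM-type duality, define $\mathcal{L}_k(\psi(\cdot,t_k))\in L^2(\omega)$ of minimal norm such that
\begin{equation*}
\Pi_{\Lambda_k}\!\left[\Psi(\tau_k^-)+(\mathds{1}_\omega\mathcal{L}_k(\psi(\cdot,t_k)),0)\right]=0,\qquad \|\mathcal{L}_k(\psi(\cdot,t_k))\|_{L^2(\omega)}\le C\mathrm{e}^{c\sqrt{\Lambda_k}}\|\Psi(\tau_k^-)\|.
\end{equation*}
Then on $[\tau_k,\tau_{k+1}^-]$ the solution lives in $(\mathrm{I}-\Pi_{\Lambda_k})\mathbb{L}^2$ and is contracted by $\mathrm{e}^{-\Lambda_k(\tau_{k+1}-\tau_k)}$. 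A telescoping energy estimate gives $\|\Psi(t_k)\|\le C\mathrm{e}^{-\alpha b^k}\|\Psi_0\|$; since $T/(T-t_k)=b^k$, monotonicity of $t\mapsto T/(T-t)$ on each $[t_k,t_{k+1}]$ upgrades this to the pointwise bound $\|\Psi(t)\|\le C\mathrm{e}^{-T/(K(T-t))}\|\Psi_0\|$, and the same estimate yields $\|\mathcal{L}_k(\psi(\cdot,t_k))\|_{L^2(\omega)}\to 0$.

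The main obstacle is the spectral inequality. For scalar parabolic problems with Dirichlet or Neumann conditions the log-convexity computation is classical, but here the matrix structure forces the second-derivative identity to carry surface contributions such as $\int_\Gamma|\nabla_\Gamma\psi_\Gamma|^2\,\d S$ and $\int_\Gamma\partial_\nu\psi\,\psi_\Gamma\,\d S$, coming from the coupling of the bulk trace with $\Delta_\Gamma\psi_\Gamma$. Controlling the signs and sizes of these terms via tangential integration by parts, trace inequalities and elliptic regularity up to the boundary—while keeping the explicit dependence on $\sqrt{\Lambda}$ in the constants—is the technical heart of the proof and is where the ``sophisticated estimates incorporating several boundary terms'' advertised in the abstract enter.
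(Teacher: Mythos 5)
Your plan is coherent, but it is a genuinely different route from the paper's. You run the classical Lebeau--Robbiano machine: a spectral inequality $\|\Psi\|\le C\mathrm{e}^{c\sqrt{\Lambda}}\|\psi\|_{L^2(\omega)}$ on $E_\Lambda$, exact annihilation of the low modes $\Pi_{\Lambda_k}\Psi(\tau_k)=0$ by a finite-dimensional HUM argument, then dissipation of the high modes. The paper never proves or invokes such a spectral inequality. Its engine is the logarithmic convexity observability estimate at one point of time (Theorems \ref{theo1.1}--\ref{theo1.2}, proved in Section \ref{sec3}), converted via Lemma \ref{thm1.2} into \emph{approximate} impulse controllability with an explicit cost $\kappa(T,\varepsilon)$: each $h_i$ steers the eigenfunction datum $\Phi_i$ to a state of norm $\le\varepsilon=\mathrm{e}^{-\eta b^k}/\sum_{\lambda_i\le\Lambda_k}1$, not to zero, and $\mathcal{L}_k(\vartheta)=\sum_{\lambda_i\le\Lambda_k}\langle\vartheta,\Phi_i\rangle h_i$. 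The residual low-frequency error is then absorbed into the same geometric decay as the high-frequency tail. Your choice $\Lambda_k\sim b^{2k}$ and the balance "dissipation beats cost" match the paper's $\Lambda_k=\lambda_1+\frac{\eta}{T}\frac{b^{2k+1}}{b-1}$ with $\eta$ large, and the conversion of $\|\Psi(t_k)\|\le C\mathrm{e}^{-\alpha b^k}\|\Psi_0\|$ into the continuous-time bound is the same in both arguments (though you gloss over the jump at $\tau_k$, which the paper handles by a four-case discussion using the bound on $\|\mathcal{L}_k\|$). What your route buys, if completed, is exact killing of low modes and a cleaner recursion; what the paper's route buys is that everything reduces to the one-point-in-time observability estimate it actually proves.

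The genuine gap is that your load-bearing ingredient -- the spectral inequality for the coupled bulk--surface operator $\mathbf{A}$ -- is asserted, not proved, and nothing you develop yields it. The elliptic lift $\sum c_j\cosh(\sqrt{\lambda_j}s)\Phi_j$ produces a Ventcel-type problem on $\Gamma\times(-R,R)$ whose Carleman/three-ball analysis is a substantial standalone theorem (it is essentially the content of \citep{MOZ2022}, a separate paper); it cannot be waved through as "the technical heart" and left open, since it is precisely the step the present paper replaces by its Section \ref{sec3} log-convexity argument. Two smaller points: (i) by the definition of system \eqref{1.1}, the impulse at $\tau_k$ must be a function of the state at $t_k$, i.e.\ $\mathcal{L}_k(\psi(\cdot,t_k))$, whereas you build it from $\Psi(\tau_k^-)$; this is fixable via $\Psi(\tau_k^-)=\mathrm{e}^{(\tau_k-t_k)\mathbf{A}}\Psi(t_k)$ but should be said. (ii) For the final claim $\|\mathcal{L}_k(\psi(\tau_k))\|_{L^2(\omega)}\to 0$ you need the quantitative cost $\mathrm{e}^{c\sqrt{\Lambda_k}}$ multiplied against the already-achieved decay $\mathrm{e}^{-\alpha b^{k}}$ of $\|\Psi(t_k)\|$, with the exponents compared explicitly as in \eqref{eq51}; "the same estimate yields" is too quick, since $\sqrt{\Lambda_k}\sim b^k$ is of the \emph{same} order as the decay exponent and the conclusion depends on the constants chosen in $\Lambda_k$.
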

The main key to prove the above theorem is the following result of observability estimate at one point of time. The proof is given in Section \ref{sec3}.
\begin{theorem}\label{theo1.1}
Let $\Omega \subset \mathbb{R}^N$  be a bounded domain of class $C^2$ and let $\omega \Subset \Omega$ be a nonempty open subset. Let $\langle \cdot, \cdot\rangle$ denote the usual inner product in
$L^2(\Omega) \times L^2 (\Gamma)$ and let $\|\cdot \|$ be its corresponding norm. Then the following observation estimate holds
\begin{equation}\label{1.2}
\|U(\cdot, T)\| \leq \mathrm{e}^{C\left(1+\frac{1}{T}\right)}\|u(\cdot, T)\|_{L^{2}(\omega)}^{\beta}\|U(\cdot, 0)\|^{1-\beta},
\end{equation}
where $\beta \in (0,1)$, $C >0$ and $U=\left(u,u_{\Gamma}\right)$ is the solution of the following system
\begin{empheq}[left = \empheqlbrace]{alignat=2}\label{s1.3}
\begin{aligned}
&\partial_{t} u-\Delta u=0, && \qquad\text { in } \Omega \times(0, T), \\
&\partial_{t}u_{\Gamma} - \Delta_{\Gamma} u_{\Gamma} + \partial_{\nu}u =0, && \qquad\text { on } \Gamma \times(0, T), \\
& u_{\Gamma}(x,t) = u_{\mid\Gamma}(x,t), &&\qquad\text{ on } \Gamma \times(0, T) , \\
& \left(u(\cdot, 0),u_{\Gamma}(\cdot, 0)\right)=\left(u^{0},u^{0}_{\Gamma}\right), && \qquad \text{ on } \Omega\times\Gamma.
\end{aligned}
\end{empheq}
\end{theorem}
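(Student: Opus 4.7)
The plan is to adapt the frequency-function / logarithmic convexity method of Phung--Wang (used in \citep{pkm} for Dirichlet conditions and in \citep{RBKDP} for Neumann conditions) to the coupled bulk--surface system \eqref{s1.3}. The central idea is to construct a weighted $L^2$-quantity of the pair $U=(u,u_\Gamma)$ whose logarithm is (approximately) convex in time, and then to combine this convexity with a local Gaussian concentration near a point $x_0\in\omega$ in order to obtain a Hölder-type interpolation between $\|U(0)\|$ and $\|u(T)\|_{L^2(\omega)}$ at the final time.

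First I would fix $x_0\in\omega$ and a small parameter $h>0$, and introduce a functional of the form
\[
H(t) = \int_\Omega G(x,T-t+h)\,|u(x,t)|^2\,\mathrm{d}x + \int_\Gamma G_\Gamma(x,T-t+h)\,|u_\Gamma(x,t)|^2\,\mathrm{d}S,
\]
where $G$ is an appropriate Gaussian centred at $x_0$ (typically the fundamental solution of the backward heat equation) and $G_\Gamma$ is its surface analogue. Differentiating $H$ along the flow \eqref{s1.3}, decomposing $\nabla = \nabla_\Gamma + (\partial_\nu)\nu$ on $\Gamma$, and using the divergence formula for $\Delta_\Gamma$ recalled in the introduction, I would extract the principal quadratic term together with the boundary cross-terms produced by the coupling between $\partial_\nu u$ and $u_\Gamma$. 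The objective is a differential inequality of the form $(\log H)''\geq -C\bigl(1+\tfrac{1}{T-t+h}\bigr)$, which furnishes an approximate log-convexity in time.

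Next, exploiting that for small $h$ the weight $G(\cdot,h)$ is concentrated near $x_0\in\omega$, I would bound $H(T)$ from above by $\mathrm{e}^{C/h}\|u(T)\|_{L^2(\omega)}^2$ up to a controllable remainder, while at the initial time the Gaussian bound gives $H(0)\le C\|U(0)\|^2$. Integrating the log-convexity inequality over $[0,T]$ and optimizing the auxiliary parameter $h$ then produces the announced estimate \eqref{1.2}, with an exponent $\beta\in(0,1)$ depending only on the geometry of $\omega\Subset\Omega$ and an exponential prefactor of the form $\mathrm{e}^{C(1+1/T)}$ coming from the $1/(T-t+h)$ blow-up.

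The main obstacle I expect is the bookkeeping of boundary contributions. In the Dirichlet setting of \citep{pkm} the boundary integrals disappear after integration by parts, while in \citep{RBKDP} the Neumann condition kills only the normal-derivative term; in our setting the dynamic evolution of $u_\Gamma$ generates cross contributions involving $\partial_\nu u$, $\nabla_\Gamma u_\Gamma$, and curvature-type quantities arising from $\Delta_\Gamma$. Absorbing these into the log-convexity inequality requires a careful tuning of $G_\Gamma$ so that its tangential and normal derivatives are compatible with those of $G$ on $\Gamma$, together with weighted trace inequalities to dominate the cross-terms. Once this matching is in place, the final interpolation step reduces to the standard logarithmic-convexity / Carleman-commutator argument of \citep{pkm}.
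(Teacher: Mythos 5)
There is a genuine gap in your plan: it reproduces, in essence, the single-weight logarithmic convexity argument that works only when $\Omega$ is star-shaped (or convex) with respect to the observation point, and it omits the localization-and-propagation machinery that is the actual content of the paper's proof for a general $C^2$ domain. Concretely, when you differentiate your weighted functional $H$ twice (equivalently, compute the Carleman commutator $[\mathcal{S},\mathcal{A}]$), a boundary term of the form $\frac{s}{T-t+h}\int \partial_\nu\varphi\,|\partial_\nu f|^2\,\mathrm{d}S$ appears, with $\partial_\nu\varphi(x)=-\frac12 (x-x_0)\cdot\nu(x)$. This term carries the full normal derivative squared and cannot be absorbed by the frequency function; it is harmless only where $(x-x_0)\cdot\nu(x)\ge 0$, i.e.\ where the domain is star-shaped with respect to $x_0$. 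For a general non-convex $\Omega$ and a fixed $x_0\in\omega$ this sign condition fails on part of $\Gamma$, so the inequality $(\log H)''\ge -C(1+\frac{1}{T-t+h})$ you are aiming for does not hold globally. No tuning of the surface weight $G_\Gamma$ fixes this, because the obstruction sits in the bulk normal-derivative term, not in the tangential cross-terms you flag as the main difficulty.

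The paper circumvents this by proving first a \emph{local} estimate (Theorem~\ref{theo1.2}): it multiplies $U$ by a cutoff $\chi\in C_0^\infty(B_{x_0,R_0})$ and works on $\Omega\cap B_{x_0,R_0}$, which is \emph{assumed} star-shaped with respect to $x_0$ so that the sign condition $\partial_\nu\varphi<0$ holds on $\partial(\Omega\cap B_{x_0,R_0})$. The price is a source term $G=(g,g_\Gamma)$ generated by the cutoff, supported in the annulus where $\chi$ varies; controlling $\|\mathrm{e}^{\Phi/2}G\|_0^2/\|F\|_0^2$ requires a separate lower bound on the local mass of $u$ near $x_0$ at times close to $T$ (Lemma~\ref{Lm2.1}), which your plan does not anticipate. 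Finally, Theorem~\ref{theo1.1} is deduced from the local estimate by a propagation-of-smallness argument: a chain of overlapping balls connecting any interior compact set to $\omega$, plus a finite cover of $\partial\Omega$ by balls $B_{a_j,(1+2\delta_j)R_j}$ for which $\Omega\cap B_{a_j,(1+2\delta_j)R_j}$ is star-shaped, followed by an interpolation of the resulting H\"older estimates with a common exponent $\alpha$. Without this covering/chaining step your argument only yields the result already known for convex domains, which is precisely the restriction the theorem is meant to remove.
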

\begin{remark}
The observability inequality \eqref{1.2} generalizes the one recently obtained in \citep{CGMZ'22} for a convex domain. Here we remove this geometric restriction by using a localization argument (see Theorem \ref{theo1.2}).
\end{remark}

The proof of the observability estimate in Theorem \ref{theo1.1} is based on the following result whose proof is given in Section \ref{sec3}.
\begin{theorem}\label{theo1.2}
Let $0<r<R$ and $x_{0}\in \Omega$. Assume that $B_{x_{0},r}\subset \Omega$ and $\Omega \cap B_{x_{0},R_{0}}$ is star-shaped with respect to $x_{0}$, with $R_{0}=(1+2\delta)R$ for some $\delta\in(0,1)$. There is a nonempty open subset $\omega_{0}$ of  $B_{x_{0},r}$, constants $\beta\in (0,1)$ and $C>0$ such that for any $T>0$ and $U_{0}\in \mathbb{L}^2$,
\begin{equation}\label{1.1.2}
\|u(\cdot, T)\|_{L^2(\Omega \cap B_{x_{0},R})}+\|u_{\Gamma}(\cdot, T)\|_{L^2(\Gamma \cap B_{x_{0},R})} \leq \mathrm{e}^{C\left(1+\frac{1}{T}\right)}\|u(\cdot, T)\|_{L^{2}(\omega_{0})}^{\beta}\|U(\cdot, 0)\|^{1-\beta},
\end{equation}
where $U=\left(u,u_{\Gamma}\right)$ is the solution of \eqref{s1.3}.
\end{theorem}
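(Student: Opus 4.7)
The plan is to prove \eqref{1.1.2} by a Poon-type frequency function method adapted to the coupled bulk-surface system, using a Gaussian weight concentrated at $x_{0}$ together with a radial cut-off that exploits the star-shape hypothesis on $\Omega\cap B_{x_{0},R_{0}}$.

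First I would fix a smooth radial cut-off $\chi$ that is identically one on $B_{x_{0},(1+\delta)R}$ and supported in $B_{x_{0},R_{0}}$, and set $v=\chi u$, $v_{\Gamma}=\chi u_{\Gamma}$. This truncated pair solves the same dynamic heat system with inhomogeneous sources $f=-2\nabla\chi\cdot\nabla u-(\Delta\chi)u$ (and its surface analogue), both supported in the annular shell $A=B_{x_{0},R_{0}}\setminus B_{x_{0},(1+\delta)R}$. Next, with a small parameter $h>0$, introduce the Gaussian kernel $G(x,t)=(T-t+h)^{-N/2}\exp(-|x-x_{0}|^{2}/(4(T-t+h)))$ and the Poon-type functionals
\begin{equation*}
H(t)=\int_{\Omega}v^{2}G\,\d x+\int_{\Gamma}v_{\Gamma}^{2}G\,\d S,\qquad D(t)=\int_{\Omega}|\nabla v|^{2}G\,\d x+\int_{\Gamma}|\nabla_{\Gamma}v_{\Gamma}|^{2}G\,\d S,
\end{equation*}
together with the frequency $N(t)=D(t)/H(t)$.

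The heart of the argument is a quasi-monotonicity inequality for $N$. Differentiating $H$ and $D$ using the equations forces integration by parts, which on the bulk side produces traces on $\Gamma$; the dynamic boundary condition $\partial_{\nu}u=-\partial_{t}u_{\Gamma}+\Delta_{\Gamma}u_{\Gamma}$ rewrites these in terms of intrinsic surface quantities, and the tangential Green's formula then converts them into boundary integrals carrying the radial factor $(x-x_{0})\cdot\nu$, which is non-negative on $\Gamma\cap B_{x_{0},R_{0}}$ by the star-shape assumption, so the unfavorable terms can be discarded. Integrating the resulting differential inequality for $N$ in time yields a log-convexity bound
\begin{equation*}
H(T)\leq \mathrm{e}^{C(1+1/T)}\,H(T)_{\mathrm{loc}}^{\beta}\,H(0)^{1-\beta},
\end{equation*}
where $H(T)_{\mathrm{loc}}$ collects values on a small open subset $\omega_{0}\Subset B_{x_{0},r}$. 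Cut-off errors, which are supported in $A$, are absorbed using the global energy estimate $\|U(t)\|\leq\|U(0)\|$ together with the smallness of $G(\cdot,T)$ on $A$ for small $h$. Finally, the lower bound $G(x,T)\gtrsim h^{-N/2}$ on $B_{x_{0},R}$ and the upper bound $G(x,0)\lesssim T^{-N/2}$ convert the weighted inequality into \eqref{1.1.2}.

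The main obstacle will be the derivation of the quasi-monotonicity of $N$. In contrast with the Dirichlet setting of \citep{pkm}, the boundary integrals arising from the bulk integration by parts do not vanish and must be combined with the intrinsic surface identities for $u_{\Gamma}$; signing the remaining terms requires the simultaneous use of the dynamic BC, tangential Green's formula, and the star-shape hypothesis. Tracking the cross-interactions between $\partial_{t}u_{\Gamma}$, $\Delta_{\Gamma}u_{\Gamma}$, and the Gaussian gradient $\nabla G=-\frac{x-x_{0}}{2(T-t+h)}G$ on $\Gamma$ is exactly where this estimate departs from the purely scalar case.
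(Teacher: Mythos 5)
Your overall strategy is the one the paper follows: truncate by a cut-off supported in $B_{x_{0},R_{0}}$, conjugate by a Gaussian-type weight centred at $x_{0}$, run a frequency-function / logarithmic-convexity argument in which the star-shape hypothesis signs the boundary term carrying $(x-x_{0})\cdot\nu$, and finally optimize over the small parameter $h$ to produce the interpolation exponent $\beta$. (The paper phrases the monotonicity step via the Carleman commutator decomposition $P_{1}=\mathcal{S}+\mathcal{A}$ of \citep{pkm,CGMZ'22} rather than via your $N=D/H$, but these are the same computation.)

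There is, however, a genuine gap at the point you dismiss in one sentence, namely the absorption of the cut-off errors. The source $G=(-2\nabla\chi\cdot\nabla u-\Delta\chi\,u,\dots)$ enters the differential inequality for the frequency through the ratio $\|\mathrm{e}^{\Phi/2}G\|^{2}/\|F\|^{2}$, whose \emph{denominator} is the weighted norm of the truncated solution, controlled from below only by the local mass $\|u(\cdot,t)\|^{2}_{L^{2}(\Omega\cap B_{x_{0},(1+\delta)R})}+\|u_{\Gamma}(\cdot,t)\|^{2}_{L^{2}(\Gamma\cap B_{x_{0},(1+\delta)R})}$. The "smallness of $G(\cdot,T)$ on the annulus $A$ for small $h$" gives a gain $\mathrm{e}^{-c/h}$ in the numerator, but this competes against the possible smallness of the local mass relative to $\|U_{0}\|$, and nothing in your argument prevents the latter from winning. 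The paper resolves this with Lemma \ref{Lm2.1}: a monotonicity argument for $I(t)=\|U\mathrm{e}^{\zeta/2}\|^{2}$ yields the quantitative lower bound $\|U_{0}\|^{2}\le \mathrm{e}^{(1+\delta)\delta R^{2}/(2\theta)}\,(\text{local mass at }t)$ for $T-\theta\le t\le T$, where $1/\theta$ is \emph{logarithmic} in the ratio of $\|U_{0}\|^{2}$ to the local norm at time $T$; one must then restrict to $h\lesssim\theta$, and it is precisely because $1/\theta$ is a logarithm of the quantity being estimated that the final choice of $h$ (balancing the observed term against the remainder) closes into the H\"older-type inequality \eqref{1.1.2} rather than into a circular bound. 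A second, smaller omission: the source contains $\nabla u$ and $\nabla_{\Gamma}u_{\Gamma}$, which are not controlled by the contraction estimate $\|U(t)\|\le\|U_{0}\|$ alone; the paper invokes the analytic smoothing bound $\|(-\mathbf{A})^{1/2}\mathrm{e}^{t\mathbf{A}}\|\le M t^{-1/2}$ to get $\|\nabla u(\cdot,t)\|^{2}_{L^{2}(\Omega)}+\|\nabla_{\Gamma}u_{\Gamma}(\cdot,t)\|^{2}_{L^{2}(\Gamma)}\le (M^{2}/t)\|U_{0}\|^{2}$, which is also the source of the factor $\mathrm{e}^{C(1+1/T)}$ in \eqref{1.1.2}. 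Without these two ingredients the proof does not close.
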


The rest of the paper is organized as follows: in Section \ref{sec2}, the functional setting and some useful results associated to the heat equation with dynamic boundary conditions are presented along with the wellposedness of the corresponding systems. In Section \ref{sec3}, we elaborate the logarithmic convexity estimate for the system \eqref{s1.3}. Section \ref{sec4} is devoted to establish the finite-time stabilization via impulse controls. The impulse null controllability of the system \eqref{1.1} will be then a direct consequence.
\section{Functional setting}\label{sec2}
In this section, we recall some results that will be useful in the sequel. We will often use the following real spaces
$$\mathbb{L}^2:=L^2(\Omega, \d x)\times L^2(\Gamma, \d S) \quad \text{and} \quad \mathbb{L}_{0}^2:=L^2(\Omega\cap B_{x_{0},R_{0}}, \d x)\times L^2(\partial(\Omega\cap B_{x_{0},R_{0}}), \d S), $$
which are Hilbert spaces equipped with the inner products given by
\begin{align*}
&\langle (u,u_\Gamma),(v,v_\Gamma)\rangle_{\mathbb{L}^2}:=\langle (u,u_\Gamma),(v,v_\Gamma)\rangle =\langle u,v\rangle_{L^2(\Omega)} +\langle u_\Gamma,v_\Gamma\rangle_{L^2(\Gamma)},\\
&\langle (u,u_\Gamma),(v,v_\Gamma)\rangle_{\mathbb{L}_{0}^2}:=\langle (u,u_\Gamma),(v,v_\Gamma)\rangle_{0} =\langle u,v\rangle_{L^2(\Omega\cap B_{x_{0},R_{0}})} +\langle u_\Gamma,v_\Gamma\rangle_{L^2(\partial(\Omega\cap B_{x_{0},R_{0}}))},
\end{align*}
where the Lebesgue measure on $\Omega$ is denoted by $\d x$ and the surface measure on $\Gamma$ by $\d S$. We also consider the space
$$\mathbb{H}^k:=\left\{(u,u_\Gamma)\in H^k(\Omega)\times H^k(\Gamma)\colon u_{\mid\Gamma} =u_\Gamma \right\} \text{ for } k=1,2,$$ equipped with the standard product norm.

System \eqref{s1.3} can be written as an abstract Cauchy problem
\begin{equation}
\begin{cases}
\hspace{-0.1cm} \partial_t \mathbf{U}=\mathbf{A} \mathbf{U}, \quad 0<t \le T, \nonumber\\
\hspace{-0.1cm} \mathbf{U}(0)=(u^0, u^0_\Gamma), \nonumber
\end{cases}
\end{equation}
where $\mathbf{U}:=(u,u_{\Gamma})$. The linear operator $\mathbf{A} \colon D(\mathbf{A}) \subset \mathbb{L}^2 \longrightarrow \mathbb{L}^2$ is given by
\begin{equation*}
\mathbf{A}=\begin{pmatrix} \Delta & 0\\ -\partial_\nu & \Delta_\Gamma \end{pmatrix}, \qquad \qquad D(\mathbf{A})=\mathbb{H}^2.
\end{equation*}
The operator $\mathbf{A}$ is selfadjoint and dissipative; it generates then an analytic $C_0$-semigroup of contractions $\left(\mathrm{e}^{t\mathbf{A}}\right)_{t\geq 0}$ on $\mathbb{L}^2$ (see \citep{MMS'17}). Hence, the solution map $t\mapsto \mathrm{e}^{t\mathbf{A}} \mathbf{U}_0$ is infinitely many times differentiable for $t>0$ and $\mathrm{e}^{t\mathbf{A}} \mathbf{U}_0 \in D\left(\mathbf{A}^m\right)$ for every $\mathbf{U}_0 \in \mathbb{L}^2$ and $m\in \mathbb{N}$.

On the other hand, we rewrite system \eqref{1.1} as the impulsive Cauchy problem
\begin{equation}
\text{(ACP)} \;\; \begin{cases}
\hspace{-0.1cm} \partial_t \Psi(t)=\mathbf{A} \Psi(t), \quad (0,T)\setminus \displaystyle \bigcup _{k\geq 0 }\{\tau_{k}\}, \nonumber\\
\hspace{-0.1cm} \bigtriangleup \Psi(\tau_{k}) = (\mathds{1}_{\omega} h(t_{k}), 0),\nonumber\\
\hspace{-0.1cm} \Psi(0)=(\psi^0, \psi^0_\Gamma), \nonumber
\end{cases}
\end{equation}
where $\Psi:=(\psi,\psi_{\Gamma})$ and $\bigtriangleup \Psi(\tau_{k}) :=\Psi\left(\cdot,\tau_{k}\right) - \Psi\left(\cdot,\tau_{k}^{-}\right)$.
For all $\Psi_{0}:=(\psi^0, \psi^0_\Gamma) \in \mathbb{L}^2$, the system (ACP) has a unique mild solution given by
$$
\Psi(t) = \mathrm{e}^{t\mathbf{A}} \Psi_{0} + \sum_{k\geq 1} \mathds{1}_{\{t\geq \tau_{k} \}}(t)\, \mathrm{e}^{(t-\tau_{k})\mathbf{A}} (\mathds{1}_{\omega} h(t_{k}),0), \qquad t\in (0,T).
$$
\subsection{The weight function}
Let $\Phi \colon \overline{\Omega} \times (0,T)\rightarrow \mathbb{R} $ be the weight function defined by
\[
\Phi (x,t)=\frac{-s\left\lvert x-x_{0}\right\lvert^2}{4(T-t+h)},
\]
where $x_0 \in \omega \Subset \Omega, h > 0$ and $s \in \left(0,1\right)$. To simplify the notation, we set
\[
\varphi(x)=-\frac{\left\lvert x-x_{0}\right\lvert^2}{4}\quad \text{and}\quad \Upsilon(t)=T-t+ h,
\]
so that
$$\Phi (x,t)=\frac{s \varphi(x)}{\Upsilon(t)}.$$
The function $\varphi$ satisfies the following properties
\begin{itemize}
\item[(1)] $\varphi(x)+\left\lvert \nabla\varphi(x)\right\lvert^{2} = 0,\; \forall x\in \overline{\Omega}$,
\item[(2)] $\nabla\varphi(x)=-\dfrac{1}{2}(x-x_{0}), \;\forall x\in\overline{\Omega}$,
\item[(3)] $\Delta\varphi(x) =-\dfrac{N}{2},\; \forall x\in\overline{\Omega}$,
\item[(4)] $\nabla^2\varphi=-\dfrac{1}{2}I_{N}$ ($I_{N}$ is the identity matrix),
\item[(5)] $\partial_{\nu}\varphi(x)=-\dfrac{1}{2}(x-x_{0})\cdot \nu(x),\; \forall x\in \Gamma$.
\item[(6)] Since $\Omega \cap B_{x_{0},R_{0}}$ is star-shaped with respect to $x_{0}$, $\partial_{\nu}\varphi(x) <0,\; \forall x\in \partial(\Omega \cap B_{x_{0},R_{0}}).$
\end{itemize}
\subsection{Impulsive approximate controllability}
For $T>0$, we consider the following system with one pulse $\tau \in (0,T)$,
\begin{empheq}[left = \empheqlbrace]{alignat=2} \label{Sys5}
\begin{aligned}
&\partial_{t} \psi-\Delta \psi=0, && \qquad\text { in } \Omega \times(0, T) \backslash\{\tau\},\\
&\psi(\cdot, \tau)=\psi\left(\cdot, \tau^{-}\right)+\mathds{1}_{\omega} h(\cdot,\tau), && \qquad\text { in } \Omega,\\
&\partial_{t}\psi_{\Gamma} - \Delta_{\Gamma} \psi_{\Gamma} + \partial_{\nu}\psi =0, && \qquad\text { on } \Gamma \times(0, T)\backslash\{\tau\}, \\
&\psi_{\Gamma}(\cdot, \tau)=\psi_{\Gamma}\left(\cdot, \tau^{-}\right), && \qquad\text { on } \Gamma,\\
& \psi_{\Gamma}(x,t) = \psi_{\mid\Gamma}(x,t), &&\qquad\text{ on } \Gamma \times(0, T) , \\
& \left(\psi(\cdot, 0),\psi_{\Gamma}(\cdot, 0)\right)=\left(\psi^{0},\psi^{0}_{\Gamma}\right), && \qquad \text{ on } \Omega\times\Gamma.
\end{aligned}
\end{empheq}
\begin{definition}[see \citep{VTMN}] 
System \eqref{Sys5} is null approximate impulse controllable at time $T$ if for any $\varepsilon > 0$ and
any $\Psi^0=\left(\psi^{0},\psi^{0}_{\Gamma}\right) \in \mathbb{L}^2$, there exists a control function $h \in L^2(\omega)$ such that the associated state at final time satisfies
\begin{equation*}
\|\Psi(\cdot, T)\| \leq \varepsilon\left\Vert \Psi^0\right\Vert .
\end{equation*}
\end{definition}
\noindent This means that for every $\varepsilon >0$ and $\Psi^0=\left(\psi^{0},\psi^{0}_{\Gamma}\right) \in \mathbb{L}^2$, the set
\begin{equation*}
\mathcal{R}_{T, \Psi^{0}, \varepsilon} :=\left\{h \in L^{2}(\omega): \text { the solution of }\eqref{Sys5}\text { satisfies }\left\Vert\Psi(\cdot, T)\right\Vert \leq \varepsilon\left\Vert \Psi^{0}\right\Vert\right\},
\end{equation*}
is nonempty.

Next, we state the result on approximate impulse controllability for system \eqref{Sys5}.
\begin{lemma}[see \citep{CGMZ'22}]\label{thm1.2}
The system \eqref{Sys5} is null approximate impulse controllable at any time $T > 0$. Moreover, for any $\varepsilon > 0,$ we have
\begin{equation}
\frac{1}{\kappa^2}\|h\|_{L^{2}(\omega)}^{2}+\frac{1}{\varepsilon^2}\|\Psi(\cdot, T)\|^{2} \leq\left\|\Psi^{0}\right\|^{2},
\end{equation}
where $\kappa$ is a positive constant depending on $T$ and $\varepsilon$.
\end{lemma}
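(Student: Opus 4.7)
My approach is to convert the statement into a weak observability inequality for the adjoint of \eqref{Sys5} via Fenchel--Rockafellar duality, and then establish that observability as a consequence of the interpolation estimate of Theorem \ref{theo1.1}. The terminal state decomposes as $\Psi(\cdot,T)=\mathrm{e}^{T\mathbf{A}}\Psi^{0}+\mathrm{e}^{(T-\tau)\mathbf{A}}(\mathds{1}_{\omega}h,0)$. For any $P\in\mathbb{L}^{2}$, set $V(t):=\mathrm{e}^{(T-t)\mathbf{A}}P$; self-adjointness of $\mathbf{A}$ yields the duality identity
$$\langle \Psi(\cdot,T),P\rangle=\langle \Psi^{0},V(\cdot,0)\rangle+\langle h,v(\cdot,\tau)\rangle_{L^{2}(\omega)},$$
where $v$ is the first component of $V$. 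I then consider the strictly convex, coercive functional $\mathcal{J}(h):=\tfrac{1}{2\kappa^{2}}\|h\|_{L^{2}(\omega)}^{2}+\tfrac{1}{2\varepsilon^{2}}\|\Psi(\cdot,T)\|^{2}$, whose Fenchel--Rockafellar dual in the variable $P$ reads
$$\sup_{P\in\mathbb{L}^{2}}\left\{\langle \Psi^{0},V(\cdot,0)\rangle-\frac{\kappa^{2}}{2}\|v(\cdot,\tau)\|_{L^{2}(\omega)}^{2}-\frac{\varepsilon^{2}}{2}\|P\|^{2}\right\}.$$

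The central step is to establish the weak observability $\|V(\cdot,0)\|^{2}\leq\kappa^{2}\|v(\cdot,\tau)\|_{L^{2}(\omega)}^{2}+\varepsilon^{2}\|P\|^{2}$. Setting $W(s):=V(T-s)$, self-adjointness of $\mathbf{A}$ ensures that $W$ solves \eqref{s1.3}; applying Theorem \ref{theo1.1} with final time $T-\tau$ to $W$ on $[0,T-\tau]$ gives
$$\|V(\cdot,\tau)\|\leq\mathrm{e}^{C(1+1/(T-\tau))}\|v(\cdot,\tau)\|_{L^{2}(\omega)}^{\beta}\|P\|^{1-\beta},$$
and the contractivity of $(\mathrm{e}^{t\mathbf{A}})_{t\geq 0}$ yields $\|V(\cdot,0)\|\leq\|V(\cdot,\tau)\|$. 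Squaring and applying a weighted Young inequality, with the weight tuned so that the coefficient of $\|P\|^{2}$ equals $\varepsilon^{2}$, converts this multiplicative interpolation into the desired additive weak observability, with $\kappa=\kappa(\varepsilon,T,\tau)$ growing like $\varepsilon^{-(1-\beta)/\beta}$ as $\varepsilon\to 0$.

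To conclude, Cauchy--Schwarz combined with the weak observability and the elementary inequality $ab\leq\tfrac{1}{2} a^{2}+\tfrac{1}{2} b^{2}$ yields $\langle\Psi^{0},V(\cdot,0)\rangle\leq\tfrac{1}{2}\|\Psi^{0}\|^{2}+\tfrac{\kappa^{2}}{2}\|v(\cdot,\tau)\|_{L^{2}(\omega)}^{2}+\tfrac{\varepsilon^{2}}{2}\|P\|^{2}$, so the dual supremum is bounded by $\tfrac{1}{2}\|\Psi^{0}\|^{2}$. Strong duality then provides a minimizer $h^{*}$ of $\mathcal{J}$ satisfying $\mathcal{J}(h^{*})\leq\tfrac{1}{2}\|\Psi^{0}\|^{2}$, which after multiplying by $2$ is exactly the stated estimate. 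The main obstacle I anticipate is the careful tracking of the blow-up rate $\kappa(\varepsilon)$ in the passage from the multiplicative interpolation estimate to the additive weak observability; the impulsive structure itself only affects the duality identity through a single jump at $t=\tau$ and is handled by a routine Green-type calculation across the pulse.
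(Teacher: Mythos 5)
The paper does not prove this lemma itself; it imports it from \citep{CGMZ'22}, where the argument is precisely the penalized HUM/duality scheme you describe: a weak observability inequality for the adjoint deduced from the one-point-in-time estimate of Theorem \ref{theo1.1} via Young's inequality, followed by minimization of the dual functional. Your proposal is correct and reconstructs essentially that proof; the only point worth making explicit is that your $\kappa$ depends on $T-\tau$ as well as on $\varepsilon$ (through the factor $\mathrm{e}^{C(1+1/(T-\tau))}$ and the Young tuning $\kappa\sim\varepsilon^{-(1-\beta)/\beta}$), which is consistent with, and indeed needed for, the later quantitative use of the lemma in \eqref{equa30} where $T-\tau=(t_{k+1}-t_k)/2$.
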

The following spectral Lemma is needed for the sequel. See \citep{Ci'16} for more details.
	\begin{lemma}\label{lmsp1}
		There exists an increasing sequence of real numbers $(\lambda_n)_{n\ge 1}$ such that 
		$$0= \lambda_1 < \lambda_2 < \cdots ,$$
		converging to $\infty$ as $n\to \infty$ such that the spectrum
		of $-\mathbf{A}$ is given by
		$$\sigma(-\mathbf{A})=\{\lambda_n \mid\; n\in \mathbb{N} \}.$$
	\end{lemma}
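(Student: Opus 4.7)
The plan is to deduce the lemma from the standard spectral theorem for self-adjoint operators with compact resolvent. Concretely, I would establish three facts about $-\mathbf{A}$ on $\mathbb{L}^2$: (i) it is self-adjoint (already noted in the paper via \citep{MMS'17}); (ii) it is non-negative; and (iii) it has compact resolvent. Once these are in place, the abstract spectral theorem furnishes a sequence of real eigenvalues of finite multiplicity accumulating only at $+\infty$, and listing the \emph{distinct} eigenvalues in increasing order yields exactly the statement of Lemma~\ref{lmsp1}.

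Step 1: \emph{Non-negativity and the identification of $\lambda_{1}=0$.} For $U=(u,u_{\Gamma})\in D(\mathbf{A})=\mathbb{H}^{2}$, I would integrate by parts, using Green's formula on $\Omega$, the surface divergence formula stated in the introduction, and the trace compatibility $u_{\mid\Gamma}=u_{\Gamma}$, to obtain
\begin{equation*}
\langle -\mathbf{A}U,U\rangle = \int_{\Omega}|\nabla u|^{2}\,\d x + \int_{\Gamma}|\nabla_{\Gamma}u_{\Gamma}|^{2}_{\Gamma}\,\d S \;\ge 0 .
\end{equation*}
Thus $\sigma(-\mathbf{A})\subset[0,\infty)$. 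Taking $U=(1,1)$ shows $\mathbf{A}U=0$, so $0$ is an eigenvalue; conversely if $\langle -\mathbf{A}U,U\rangle=0$ then $u$ and $u_{\Gamma}$ are constants that must agree by the trace condition, hence $\ker(\mathbf{A})$ is one-dimensional and $\lambda_{1}=0$ is simple.

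Step 2: \emph{Compact resolvent.} Since $\Omega$ is bounded with $C^{2}$ boundary, Rellich--Kondrachov gives the compact embeddings $H^{2}(\Omega)\hookrightarrow L^{2}(\Omega)$ and $H^{2}(\Gamma)\hookrightarrow L^{2}(\Gamma)$; their product is compact, and $\mathbb{H}^{2}$, being a closed subspace of $H^{2}(\Omega)\times H^{2}(\Gamma)$ cut out by the continuous trace relation, inherits compactness of the injection $\mathbb{H}^{2}\hookrightarrow\mathbb{L}^{2}$. Because $0\in\sigma(-\mathbf{A})$, I would instead consider $(I-\mathbf{A})^{-1}\colon\mathbb{L}^{2}\to D(\mathbf{A})\subset\mathbb{L}^{2}$, which is bounded by the Lax--Milgram theorem applied to the coercive bilinear form $a(U,V)=\int_{\Omega}\nabla u\cdot\nabla v + \int_{\Gamma}\langle\nabla_{\Gamma}u_{\Gamma},\nabla_{\Gamma}v_{\Gamma}\rangle_{\Gamma} + \langle U,V\rangle$; composed with the compact injection it becomes a compact self-adjoint operator on $\mathbb{L}^{2}$.

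Step 3: \emph{Conclusion.} The spectral theorem applied to the compact self-adjoint operator $(I-\mathbf{A})^{-1}$ produces a sequence of real eigenvalues $\mu_{n}\to 0^{+}$, each of finite multiplicity, with a complete orthonormal basis of eigenfunctions; via the correspondence $\lambda_{n}=\mu_{n}^{-1}-1$ this transfers to $-\mathbf{A}$, giving a sequence of non-negative eigenvalues of finite multiplicity tending to $+\infty$. Enumerating them without repetition yields $0=\lambda_{1}<\lambda_{2}<\cdots\to\infty$ and $\sigma(-\mathbf{A})=\{\lambda_{n}\mid n\in\mathbb{N}\}$, as claimed. The main technical obstacle is the compactness of the embedding $\mathbb{H}^{2}\hookrightarrow\mathbb{L}^{2}$ under the trace constraint; it is handled by the remark that $\mathbb{H}^{2}$ is closed in the ambient product space (the trace map being continuous $H^{2}(\Omega)\to H^{3/2}(\Gamma)\hookrightarrow L^{2}(\Gamma)$), so Rellich on each factor suffices and no additional boundary compactness argument is required.
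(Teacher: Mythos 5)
Your proof is correct. The paper does not actually prove this lemma: it is quoted from the reference \citep{Ci'16}, so there is no internal argument to compare against; your self-contained route (self-adjointness from \citep{MMS'17}, non-negativity via the identity $\langle -\mathbf{A}U,U\rangle=\|\nabla u\|_{L^2(\Omega)}^2+\|\nabla_\Gamma u_\Gamma\|_{L^2(\Gamma)}^2$, compactness of $\mathbb{H}^1\hookrightarrow\mathbb{L}^2$, and the spectral theorem for the compact self-adjoint resolvent) is exactly the standard argument underlying that citation, and the paper itself implicitly uses the same quadratic-form identity later when computing $\|(-\mathbf{A})^{1/2}U\|^2$ in Section 3. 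Two minor remarks: the simplicity of $\lambda_1=0$ (your claim that $\ker\mathbf{A}$ is one-dimensional) requires $\Omega$ to be connected, which is implicit in the word ``domain'' but worth stating since $\Gamma$ itself need not be connected and it is the trace condition $u_{\mid\Gamma}=u_\Gamma$ that pins the boundary constant to the interior one; and simplicity is not actually needed for the statement of the lemma, which only lists the distinct eigenvalues, so this is a harmless extra.
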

\section{Proof of the logarithmic convexity estimate}\label{sec3}
\subsection{Proof of Theorem \ref{theo1.2}}
For the reader convenience, we split the proof into several steps:

\textbf{Step 1. Symmetric part and antisymmetric part.} Let $\chi\in C_{0}^{\infty}(B_{x_{0},R_{0}})$ such that
\begin{equation}\label{Ass0}
 0\leq \chi\leq 1 \qquad \text{and} \qquad \chi=1 \;\; \text{on}\;\; \lbrace x;\; \mid x-x_{0}\mid\leq (1+3\delta/2)R\rbrace.
 \end{equation}
Introduce 
$$Z=\chi U,$$
such that $ Z =\begin{pmatrix}
z \\ z_{\Gamma}
\end{pmatrix}$. We get 
\begin{align*}
    &\partial_{t}z-\Delta z =g:=-2\nabla\chi\cdot \nabla u-\Delta \chi u, \;\; \text{in}\;\; \Omega \cap B_{x_{0},R_{0}},\nonumber\\
    &\partial_{t}z_{\Gamma}-\Delta_{\Gamma}z_{\Gamma}+\partial_{\nu}z=g_{\Gamma}:= -2 \langle \nabla_{\Gamma}\chi,\nabla_{\Gamma}u_{\Gamma}\rangle_{\Gamma}-\Delta_{\Gamma}\chi u_{\Gamma}\partial_{\nu}\chi u_{\Gamma},\;\; \text{on}\;\; \Gamma\cap B_{x_{0},R_{0}},\\
    &z=0, \qquad \text{on}\;\; \partial(\Omega\cap B_{x_{0},R_{0}})\left\backslash\Gamma\right..\nonumber
\end{align*}
We define
\begin{equation*}
F(x, t)=Z(x, t) \mathrm{e}^{\Phi(x, t) / 2},
\end{equation*}
such that $ F =\begin{pmatrix}
f \\ f_{1}
\end{pmatrix}$, where $f_{1}=f_{\Gamma}$ on $\Gamma\cap B_{x_{0},R_{0}}$ and $f_{1}=f=0$ on $\partial(\Omega\cap B_{x_{0},R_{0}})\left\backslash\Gamma\right.$. We introduce the operator $P$ as follows
\begin{equation*}
P F = \mathrm{e}^{\Phi / 2}
\begin{pmatrix}
\partial_{t}-\Delta & 0 \\[3mm]
\partial_{\nu} & \partial_{t}-\Delta_{\Gamma}
\end{pmatrix} \mathrm{e}^{-\Phi / 2} F=\mathrm{e}^{\Phi / 2}G,
\end{equation*}
with $G=\begin{pmatrix}
g \\ g_{1}
\end{pmatrix}$, such that $g_1=g_{\Gamma}$ on $\Gamma\cap B_{x_{0},R_0}$ and $g_1=0$ on $\partial(\Omega\cap B_{x_{0},R_{0}})\left\backslash\Gamma\right.$. Then
\begin{equation*}
P F =
\begin{pmatrix}
\partial_{t} f-\Delta f-\frac{1}{2}f\left(\partial_{t} \Phi+\frac{1}{2}\mid\nabla \Phi\mid^{2}\right)+ \nabla \Phi \cdot\nabla f +\frac{1}{2}\Delta \Phi f \\[3mm]
\partial_{t} f_{\Gamma}-\Delta_{\Gamma} f_{\Gamma}+ \partial_{\nu} f -\frac{1}{2} f_{\Gamma}\left(\partial_{t} \Phi+\frac{1}{2}\mid\nabla_{\Gamma} \Phi\mid^{2}\right)-\frac{1}{2} \partial_{\nu}\Phi f_{\Gamma} \\
+\frac{1}{2}\Delta_{\Gamma} \Phi f_{\Gamma} + \langle \nabla_{\Gamma} \Phi ,\nabla_{\Gamma} f_{\Gamma}\rangle_{\Gamma}
\end{pmatrix}.
\end{equation*}
Let us define $P_{1}$ as follows
\begin{equation*}
P_{1} F =
\begin{pmatrix}
\Delta f+\frac{1}{2}f\left(\partial_{t} \Phi+\frac{1}{2}\mid\nabla \Phi\mid^{2}\right)- \nabla \Phi\cdot \nabla f -\frac{1}{2}\Delta \Phi f \\[3mm]
\Delta_{\Gamma} f_{\Gamma}- \partial_{\nu} f +\frac{1}{2} f_{\Gamma}\left(\partial_{t} \Phi+\frac{1}{2}\mid\nabla_{\Gamma} \Phi\mid^{2}\right)+\frac{1}{2} \partial_{\nu}\Phi f_{\Gamma} \\
-\frac{1}{2}\Delta_{\Gamma} \Phi f_{\Gamma} - \langle \nabla_{
\Gamma} \Phi ,\nabla_{\Gamma} f_{\Gamma}\rangle_{\Gamma}
\end{pmatrix}.
\end{equation*}
 Hence
\begin{equation*}
\begin{pmatrix}
\partial_{t} f \\
\partial_{t} f_{\Gamma}
\end{pmatrix} = P_{1} F+\mathrm{e}^{\Phi / 2}G.
\end{equation*}
Let us compute the adjoint operator of $P_1$. For any $G = \begin{pmatrix}
g \\
g_{\Gamma}
\end{pmatrix} \in  \mathbb{H}^1$, we have  
\begin{align*}
\left\langle P_1 F , G \right\rangle_{0} &=\displaystyle\int_{\Omega\cap B_{x_{0},R_{0}}} \Delta f g \mathrm{d} x+\int_{\Omega\cap B_{x_{0},R_{0}}} \frac{1}{2} f g\left(\partial_{t} \Phi+\frac{1}{2}\mid\nabla \Phi\mid^{2}\right) \mathrm{d}x\\
&- \int_{\Omega\cap B_{x_{0},R_{0}}} g \nabla \Phi \cdot \nabla f \mathrm{d} x-\displaystyle\frac{1}{2} \int_{\Omega\cap B_{x_{0},R_{0}}}  \Delta \Phi fg \mathrm{d}x \\
&+ \int_{\Gamma\cap B_{x_{0},R_{0}}}  \Delta_{\Gamma} f_{\Gamma}g_{\Gamma} \mathrm{d} S - \int_{\Gamma\cap B_{x_{0},R_{0}}}  \partial_{\nu} f g_{\Gamma} \mathrm{d} S\\
&+\displaystyle \frac{1}{2} \int_{\Gamma\cap B_{x_{0},R_{0}}}  f_{\Gamma} g_{\Gamma}\left(\partial_{t} \Phi+\frac{1}{2}\mid\nabla_{\Gamma} \Phi\mid^{2}\right)\mathrm{d}S+\frac{1}{2} \int_{\Gamma\cap B_{x_{0},R_{0}}}  \partial_{\nu}\Phi f_{\Gamma} g_{\Gamma} \mathrm{d} S\\
 &-\displaystyle \frac{1}{2} \int_{\Gamma\cap B_{x_{0},R_{0}}} \Delta_{\Gamma} \Phi f_{\Gamma} g_{\Gamma} \mathrm{d} S - \int_{\Gamma\cap B_{x_{0},R_{0}}} g_{\Gamma}\left\langle\nabla_{\Gamma} \Phi, \nabla f_{\Gamma}\right\rangle_{\Gamma} \mathrm{d} S.
 \end{align*}
By using Green's formula, we obtain
 \begin{align*}
\left\langle P_1 F , G \right\rangle_{0}&= - \displaystyle\int_{\Gamma\cap B_{x_{0},R_{0}}}  f_{\Gamma} \partial_{\nu} g \mathrm{d} S + \int_{\Omega\cap B_{x_{0},R_{0}}}  f  \Delta g \mathrm{d} x \\
&+\displaystyle\frac{1}{2} \int_{\Omega\cap B_{x_{0},R_{0}}}  f g\left(\partial_{t} \Phi+\frac{1}{2}\mid\nabla \Phi\mid^{2}\right) \mathrm{d}x -\frac{1}{2}\int_{\Gamma\cap B_{x_{0},R_{0}}}  \partial_{\nu}\Phi f_{\Gamma} g_{\Gamma} \mathrm{d} S\\
&\displaystyle +\frac{1}{2} \int_{\Omega\cap B_{x_{0},R_{0}}} \Delta \Phi f g \mathrm{d}x + \int_{\Omega\cap B_{x_{0},R_{0}}} f \nabla \Phi \nabla g \mathrm{d}x \\
&+ \displaystyle\int_{\Gamma\cap B_{x_{0},R_{0}}} f_{\Gamma} \Delta_{\Gamma} g_{\Gamma} \mathrm{d} S
+ \frac{1}{2} \int_{\Gamma\cap B_{x_{0},R_{0}}}  f_{\Gamma} g_{\Gamma}\left(\partial_{t} \Phi+\frac{1}{2}\mid\nabla_{\Gamma} \Phi\mid^{2}\right) \mathrm{d} S \\
&+\displaystyle \frac{1}{2} \int_{\Gamma\cap B_{x_{0},R_{0}}}  \Delta_{\Gamma}\Phi f_{\Gamma} g_{\Gamma} \mathrm{d} S+\int_{\Gamma\cap B_{x_{0},R_{0}}} f_{\Gamma}\left\langle\nabla_{\Gamma}f_{\Gamma} , \nabla_{\Gamma} g_{\Gamma}\right\rangle_{\Gamma} \mathrm{d} S\\
&= \left\langle F, P_{1}^{*} G \right\rangle_{0}.
\end{align*}
Next, we introduce the following operators
\begin{equation*}
\mathcal{A} = \frac{P_1 - P_{1}^{*}}{2} = \begin{pmatrix}
-\nabla \Phi\cdot \nabla -\frac{1}{2}\Delta \Phi & 0 \\[3mm] 
0 & \frac{1}{2} \partial_{\nu}\Phi - \left\langle \nabla_{\Gamma} \Phi, \nabla_{\Gamma} \right\rangle_{\Gamma} - \frac{1}{2} \Delta_{\Gamma}\Phi
\end{pmatrix},
\end{equation*}
which is antisymmetric on $\mathbb{H}^1$,  and 
\begin{equation*}
\mathcal{S} = \frac{P_1 + P_{1}^{*}}{2} = \begin{pmatrix}
\Delta + \eta & 0 \\[3mm]
- \partial_{\nu} & \Delta_{\Gamma}+ \theta
\end{pmatrix},\\
\end{equation*}
which is symmetric on $\mathbb{H}^1$, where
\begin{align*}
\eta &= \frac{1}{2} \left( \partial_{t}\Phi + \frac{1}{2}\left\lvert\nabla \Phi \right\lvert^{2}\right), \\
\theta &= \frac{1}{2} \left( \partial_{t}\Phi + \frac{1}{2}\left\lvert\nabla_{\Gamma} \Phi \right\lvert^{2}\right).
\end{align*}
Thus
$
\partial_{t} F = \mathcal{S} F + \mathcal{A} F+\mathrm{e}^{\Phi / 2}G.
$
\smallskip

\textbf{Step 2. Energy estimates.} Multiplying the above equation by $F$, we obtain
\begin{equation*}
\frac{1}{2} \partial_{t} \|F\|_{0}^{2} - \left\langle \mathcal{S}F, F \right\rangle_{0} =\left\langle \mathrm{e}^{\Phi(x, t) / 2}G, F \right\rangle_{0}.
\end{equation*}
Next, we introduce the frequency function $\displaystyle 
\mathcal{N}=\frac{\langle-\mathcal{S} F, F\rangle_{0}}{\|F\|_{0}^{2}}$. Then
\begin{equation*}
\frac{1}{2} \partial_{t} \|F\|_{0}^{2} + \mathcal{N} \|F\|_{0}^{2}  =\left\langle \mathrm{e}^{\Phi(x, t) / 2}G, F \right\rangle_{0}.
\end{equation*}
The derivative of $\mathcal{N}$ satisfies 
\begin{equation*}
	\begin{array}[c]{ll}
		\dfrac{d}{d t} \mathcal{N} & \leq \dfrac{1}{\|F\|_{0}^{2}}\bigg( \left\langle-\left(\mathcal{S}^{\prime}+[\mathcal{S}, \mathcal{A}]\right) F, F\right\rangle_{0}-\displaystyle \int_{\Gamma} \partial_{\nu}f\left( \mathcal{A}_{1} f\right)_{\partial(\Omega\cap B_{x_{0},R_{0}})} \mathrm{d} S  \\
		&\quad+\int_{\partial(\Omega\cap B_{x_{0},R_{0}})} \partial_{\nu} f  \mathcal{A}_{2} f_{\Gamma} \mathrm{d} S + \displaystyle\int_{\Gamma\cap B_{x_{0},R_{0}}} \partial_{\nu} \Phi f_{\Gamma}  \left(\mathcal{S}_{1} f\right)_{\Gamma} \mathrm{d}S \\
		&- \int_{\Gamma\cap B_{x_{0},R_{0}}} \partial_{\nu} \Phi f_{\Gamma}  \mathcal{S}_{2} F \mathrm{d} S \bigg)+\dfrac{2}{\|F\|_{0}^2}\|\mathrm{e}^{\Phi(x, t) / 2}G\|_{0}^2, 
	\end{array}
\end{equation*}
where
$ \mathcal{S}F =\left(\begin{array}{l} \mathcal{S}_{1} f \\  \mathcal{S}_{2}\left( f, f_{\Gamma}\right)\end{array}\right)$, $ \mathcal{A}F =\left(\begin{array}{l} \mathcal{A}_{1} f \\  \mathcal{A}_{2} f_{\Gamma}\end{array}\right)$and $\left\langle [\mathcal{S}, \mathcal{A}] F, F\right\rangle_{0} = \langle\mathcal{S}  \mathcal{A} F, F\rangle_{0}-\langle\mathcal{A} \mathcal{S} F, F\rangle_{0}$.\\
Indeed,
\begin{align}\label{2.17}
\frac{\d}{\d t} \mathcal{N} &=\frac{1}{\|F\|_{0}^{4}}\left(\frac{\d}{\d t}\langle-\mathcal{S} F, F\rangle_{0}\|F\|_{0}^{2}+\langle\mathcal{S} F, F\rangle_{0} \frac{\d}{\d t}\|F\|_{0}^{2}\right)\nonumber \\
&=\frac{1}{\|F\|_{0}^{2}}\left[\left\langle-\mathcal{S}^{\prime} F, F\right\rangle_{0}-2\left\langle\mathcal{S} F, F^{\prime}\right\rangle_{0}\right]+\frac{2}{\|F\|_{0}^{4}}\langle\mathcal{S} F, F\rangle_{0}^{2}\nonumber\\
&+\frac{2}{\|F\|_{0}^{4}}\langle\mathcal{S} F, F\rangle_{0}\left\langle \mathrm{e}^{\Phi(x, t) / 2}G,F \right\rangle_{0}\nonumber \\
&=\frac{1}{\|F\|_{0}^{2}}\left[\left\langle-\mathcal{S}^{\prime} F, F\right\rangle_{0}-2\left\langle\mathcal{S} F, F^{\prime}\right\rangle_{0}\right]\nonumber\\
&+\frac{2}{\|F\|_{0}^{4}} \bigg[ \mid  \left< SF,F \right>_{0}+\dfrac{1}{2}\left< \mathrm{e}^{\Phi(x, t) / 2}G, F\right>_{0} \mid^2-\mid \dfrac{1}{2} \left< \mathrm{e}^{\Phi(x, t) / 2}G, F\right>_{0}\mid^2\bigg]\nonumber\\
&\leq\frac{1}{\|F\|_{0}^{2}}\left[\left\langle-\mathcal{S}^{\prime} F, F\right\rangle_{0}-2\langle\mathcal{S} F, \mathcal{A} F\rangle_{0}\right]-\frac{2}{\|F\|_{0}^{2}}\left[-\|\mathcal{S} F\|_{0}^{2}+\langle\mathcal{S} F, \mathrm{e}^{\Phi(x, t) / 2}G\rangle_{0}\right]\nonumber\\
&+\frac{2}{\|F\|_{0}^{4}}\left\|SF+\frac{1}{2}\mathrm{e}^{\Phi(x, t) / 2}G\right\|_{0}^2\|F\|_{0}^2\nonumber\\
&\leq \frac{1}{\|F\|_{0}^{2}}\left[\left\langle-\mathcal{S}^{\prime} F, F\right\rangle_{0}-2\langle\mathcal{S} F, \mathcal{A} F\rangle_{0}\right]+\frac{2}{\|F\|_{0}^{2}}\left \|\frac{1}{2}\mathrm{e}^{\Phi(x, t) / 2}G\right \|_{0}^2.
\end{align}
On the other hand, we have
\begin{align*}
\langle\mathcal{S} F, \mathcal{A} F\rangle &= \displaystyle\int_{\Omega\cap B_{x_{0},R_{0}}} \left( \Delta f + \eta f\right) \mathcal{A}_{1}f \mathrm{d} x + \int_{\partial(\Omega\cap B_{x_{0},R_{0}})} \left( - \partial_{\nu} f + \Delta_{\Gamma} f_{1}+\theta f_{1}\right) \mathcal{A}_{2}f_{1}\mathrm{d}S\\
&=\displaystyle \int_{\partial(\Omega\cap B_{x_{0},R_{0}})} \partial_{\nu} f \left(\mathcal{A}_{1}f\right)_{\partial(\Omega\cap B_{x_{0},R_{0}})} \mathrm{d} S - \int_{\Omega\cap B_{x_{0},R_{0}}} \nabla f \cdot\nabla\left(\mathcal{A}_{1}f\right) \mathrm{d}x\\
&+ \int_{\Omega\cap B_{x_{0},R_{0}}}\eta f \mathcal{A}_{1}f \d x-\displaystyle\int_{\Gamma\cap B_{x_{0},R_{0}}} \partial_{\nu} f \mathcal{A}_{2} f_{\Gamma} \mathrm{d} S  \\
&-\displaystyle\int_{\Gamma\cap B_{x_{0},R_{0}}} \left\langle\nabla_{\Gamma}f_{\Gamma} , \nabla_{\Gamma}\mathcal{A}_{2} f_{\Gamma}\right\rangle_{\Gamma} \mathrm{d} S + \displaystyle\int_{\Gamma\cap B_{x_{0},R_{0}}}\theta f_{\Gamma} \mathcal{A}_{2}f_{\Gamma} \mathrm{d} S\\
&=\displaystyle\int_{\partial(\Omega\cap B_{x_{0},R_{0}})} \partial_{\nu} f \left(\mathcal{A}_{1}f\right)_{\partial(\Omega\cap B_{x_{0},R_{0}})} \mathrm{d} S  - \int_{\Gamma\cap B_{x_{0},R_{0}}} f_{\Gamma} \partial_{\nu}\left(\mathcal{A}_{1}f\right) \mathrm{d}S\\
&+\displaystyle\int_{\Omega\cap B_{x_{0},R_{0}}} f \Delta\left(\mathcal{A}_{1}f\right) \mathrm{d}x + \int_{\Omega\cap B_{x_{0},R_{0}}}\eta f \mathcal{A}_{1}f \mathrm{d}x-\int_{\Gamma\cap B_{x_{0},R_{0}}} \partial_{\nu} f \mathcal{A}_{2} f_{\Gamma} \mathrm{d} S \\
&\displaystyle + \int_{\Gamma\cap B_{x_{0},R_{0}}} f_{\Gamma} \Delta_{\Gamma}\left(\mathcal{A}_{2} f_{\Gamma}\right) \mathrm{d} S + \int_{\Gamma\cap B_{x_{0},R_{0}}}\theta f_{\Gamma} \mathcal{A}_{2}f_{\Gamma} \mathrm{d} S\\
&= \displaystyle\int_{\Omega\cap B_{x_{0},R_{0}}}\left( \Delta + \eta \right)  \left(\mathcal{A}_{1}f\right) f \mathrm{d}x+\int_{\partial(\Omega\cap B_{x_{0},R_{0}})} \partial_{\nu} f \left(\mathcal{A}_{1}f\right)_{\partial(\Omega\cap B_{x_{0},R_{0}})} \mathrm{d} S \\
&+ \int_{\Gamma\cap B_{x_{0},R_{0}}} \left( -\partial_{\nu} \left( \mathcal{A}_{1} f \right)+ \left( \Delta_{\Gamma}+\theta\right)\left( \mathcal{A}_{2} f_{\Gamma} \right)\right) f_{\Gamma} \mathrm{d} S  \\
&-\int_{\Gamma\cap B_{x_{0},R_{0}}} \partial_{\nu} f \mathcal{A}_{2} f_{\Gamma} \mathrm{d} S.
\end{align*}
Therefore, we obtain
\begin{align}\label{3.15}
\langle\mathcal{S} F, \mathcal{A} F\rangle_{0}& = \langle\mathcal{S}  \mathcal{A} F, F\rangle_{0}+\int_{\partial(\Omega\cap B_{x_{0},R_{0}})} \partial_{\nu} f \left(\mathcal{A}_{1}f\right)_{\partial(\Omega\cap B_{x_{0},R_{0}})} \mathrm{d} S\nonumber\\
&-\int_{\Gamma\cap B_{x_{0},R_{0}}} \partial_{\nu} f \mathcal{A}_{2} f_{\Gamma} \mathrm{d} S.
\end{align}
Similarly, we prove that 
\begin{equation}\label{3.16}
\langle\mathcal{S} F, \mathcal{A} F\rangle_{0} = - \langle\mathcal{A} \mathcal{S}   F, F\rangle_{0} - \int_{\Gamma\cap B_{x_{0},R_{0}}} \partial_{\nu}\Phi f_{\Gamma} \left( \mathcal{S}_{1} f \right)_{\Gamma}\mathrm{d} S + \int_{\Gamma\cap B_{x_{0},R_{0}}}  \partial_{\nu}\Phi f_{\Gamma}  \mathcal{S}_{2}F \mathrm{d} S.
\end{equation}
By combining \eqref{3.15} and \eqref{3.16}, we obtain
\begin{align}\label{2.20}
 2 \langle\mathcal{S} F, \mathcal{A} F\rangle_{0} =& \langle\mathcal{S}  \mathcal{A} F, F\rangle_{0}-\langle\mathcal{A}  \mathcal{S} F, F\rangle_{0} + \int_{\partial(\Omega\cap B_{x_{0},R_{0}})} \partial_{\nu} f \left(\mathcal{A}_{1}f\right)_{\partial(\Omega\cap B_{x_{0},R_{0}})} \mathrm{d} S \nonumber\\
 &- \int_{\Gamma\cap B_{x_{0},R_{0}}} \partial_{\nu}f  \mathcal{A}_{2} f_{\Gamma}\mathrm{d} S- \int_{\Gamma\cap B_{x_{0},R_{0}}} \partial_{\nu}\Phi f_{\Gamma} \left( \mathcal{S}_{1} f \right)_{\Gamma}\mathrm{d} S \nonumber\\
 &+ \int_{\Gamma\cap B_{x_{0},R_{0}}}  \partial_{\nu}\Phi f_{\Gamma}  \mathcal{S}_{2}F \mathrm{d} S.
 \end{align}
Combining the equalities \eqref{2.17},\eqref{2.20} yields the desired formula. 
\bigskip

\textbf{Step 3.\, Calculating the Carleman commutator.} 
The proof of the following inequality \eqref{9.1} (respectively \eqref{14.11}) is exactly the same as the proof presented in Step 3 (respectively step 4) of the paper \citep{CGMZ'22}.
\begin{align}\label{9.1}
& \left \langle -(\mathcal{S}^\prime+[\mathcal{S},A])F,F\right \rangle_{0}-\int_{\partial(\Omega\cap B_{x_{0},R_{0}})} \partial_{\nu} f \left(\mathcal{A}_{1}f\right)_{\partial(\Omega\cap B_{x_{0},R_{0}})} \mathrm{d} S\nonumber\\
&+\int_{\Gamma\cap B_{x_{0},R_{0}}}\partial_{\nu}f A_{2}f_{\Gamma}\mathrm{d}S+\int_{\Gamma\cap B_{x_{0},R_{0}}}\partial_{\nu}\Phi f_{\Gamma}(\mathcal{S}_{1}f)_{\Gamma}\mathrm{d} S-\int_{\Gamma\cap B_{x_{0},R_{0}}}\partial_{\nu}\Phi f_{\Gamma}\mathcal{S}_{2}F\mathrm{d}S\nonumber\\
&=\dfrac{-s}{\Upsilon^3}\int_{\Omega\cap B_{x_{0},R_{0}}}\left( \varphi+\dfrac{s}{2}\mid\nabla \varphi\mid^2\right) \mid f\mid^2\mathrm{d}x+\dfrac{s}{\Upsilon}\int_{\Omega\cap B_{x_{0},R_{0}}}\mid\nabla f\mid^2\mathrm{d}x\nonumber\\
&+\dfrac{s}{\Upsilon}\int_{\partial(\Omega\cap B_{x_{0},R_{0}})}\partial_{\nu}\varphi \mid\partial_{\nu} f\mid^2\mathrm{d}x-\dfrac{s^2(2-s)}{4\Upsilon^3}\int_{\Omega\cap B_{x_{0},R_{0}}}\mid\nabla\varphi\mid^2 \mid f\mid^2\mathrm{d}x\nonumber\\
&-\dfrac{s}{\Upsilon^3}\int_{\Gamma\cap B_{x_{0},R_{0}}}\left( \varphi+\dfrac{s}{2}\mid\nabla_{\Gamma} \varphi\mid^2\right) \mid f_{\Gamma}\mid^2\mathrm{d}S-\dfrac{2s}{\Upsilon}\int_{\Gamma\cap B_{x_{0},R_{0}}}\nabla_{\Gamma}^2\varphi(\nabla_{\Gamma} f_{\Gamma},\nabla_{\Gamma} f_{\Gamma})\mathrm{d}S\nonumber\\
&+\dfrac{s}{\Upsilon}\int_{\Gamma\cap B_{x_{0},R_{0}}}(\Delta \varphi+\partial_{\nu}\varphi-\Delta_{\Gamma}\varphi)\partial_{\nu}f f_{\Gamma}\mathrm{d}S+\dfrac{s}{2\Upsilon}\int_{\Gamma\cap B_{x_{0},R_{0}}}(\Delta_{\Gamma}^2\varphi-\Delta_{\Gamma}\partial_{\nu}\varphi)\mid f_{\Gamma}\mid^2\mathrm{d}S\nonumber\\
& -\dfrac{s^2}{2\Upsilon^3}\int_{\Gamma\cap B_{x_{0},R_{0}}}\left(\mid\nabla_{\Gamma}\varphi\mid^2+s\nabla_{\Gamma}^2\varphi(\nabla_{\Gamma}\varphi,\nabla_{\Gamma}\varphi)\right) \mid f_{\Gamma}\mid^2\mathrm{d}S\nonumber\\
&+\dfrac{s^3}{4\Upsilon^3}\int_{\Gamma\cap B_{x_{0},R_{0}}}(\partial_{\nu}\varphi)^3\mid f_{\Gamma}\mid^2\mathrm{d}S.
\end{align}
Next we estimate each term of this formula which requires making some positive terms small. This can be established thanks to the parameter $s\in (0,1)$ which is taken sufficiently small. Therefore, For any $\hbar\in(0,1]$ we prove that
\begin{align}\label{14.11}
&\left\langle-(\mathcal{S}^\prime+[\mathcal{S},A])F,F\right\rangle_{0}-\int_{\partial(\Omega\cap B_{x_{0},R_{0}})}\partial_{\nu}f (A_{1}f)_{\mid\partial(\Omega\cap B_{x_{0},R_{0}})}\mathrm{d}S \nonumber\\
&+\int_{\Gamma\cap B_{x_{0},R_{0}}}\partial_{\nu}f A_{2}f_{\Gamma}\mathrm{d}S+\int_{\Gamma\cap B_{x_{0},R_{0}}}\partial_{\nu}\Phi f_{\Gamma}(\mathcal{S}_{1}f)_{\Gamma}\mathrm{d} S-\int_{\Gamma\cap B_{x_{0},R_{0}}}\partial_{\nu}\Phi f_{\Gamma}\mathcal{S}_{2}F\mathrm{d}S\nonumber\\
&\leq \dfrac{1+C_{0}}{\Upsilon}\left\langle-\mathcal{S}F,F\right\rangle+\frac{C}{\hbar^2}\|F\|^2,
\end{align}
where $C=C(\overline{\Omega})> 0$ and $C_{0}=C(s)\in (0,1)$.
\smallskip

\textbf{Step 4. Intermediate estimates.} The following differential inequalities hold
\begin{empheq}[left = \empheqlbrace]{alignat=2}
\begin{aligned}\label{dIN}
&\mid\frac{1}{2}\frac{\d}{\d t}\left\Vert F\left(  \cdot,t\right)
\right\Vert ^{2}+\mathcal{N}\left(t\right)  \left\Vert F\left(
\cdot,t\right)  \right\Vert ^{2}\mid\leq \left\| \mathrm{e}^{\Phi /2}G(\cdot,t) \right\|_{0}\| F(\cdot,t) \|_{0}, \\
&\frac{\d}{\d t}\mathcal{N}\left(  t\right)  \leq\frac
{1+C_{0}}{\Upsilon\left(t\right)  }\mathcal{N}\left(  t\right)+\dfrac{\left\| \mathrm{e}^{\Phi(\cdot,t)/2}G(\cdot,t) \right\|_{0}^2}{\|F(\cdot,t)\|_{0}^2}+\frac{C}{h^2}.
\end{aligned}
\end{empheq}
Then, for any $0<t_{1}<t_{2}<t_{3}\leq T$, we obtain
\begin{align}\label{Inn0}
\left(\left\Vert F\left(\cdot,t_{2}\right)  \right\Vert_{0} ^{2}\right)
^{1+M}\leq\left(  \left\Vert F\left(  \cdot,t_{1}\right)  \right\Vert_{0}
^{2}\right)  ^{M}\left\Vert F\left(  \cdot,t_{3}\right)  \right\Vert_{0} ^{2}\mathrm{e}^{D},%
\end{align}
where%
\[
M=\dfrac{\displaystyle\int_{t_{2}}^{t_{3}}\dfrac{1}{(T-t+h)^{1+C_{0}}} \mathrm{d}t }{\displaystyle\int_{t_{1}}^{t_{2}}\dfrac{1}{(T-t+h)^{1+C_{0}}} \mathrm{d}t},\] 
and
\begin{align*}
    D&=M(t_{2}-t_{1})^2 \dfrac{C}{h^2}+2M(t_{2}-t_{1})\int_{t_{1}}^{t_{2}}\dfrac{\left\Vert  \mathrm{e}^{\Phi(\cdot,t) /2}G(\cdot,t)\right\Vert_{0}^2}{\|F(\cdot,t)\|_{0}^2}\mathrm{d}t\\
    &+\int_{t_{2}}^{t_{3}}\left(\dfrac{T-t_{2}+h}{T-t+h}\right)^{1+C_{0}}\mathrm{d}t\int_{t_{2}}^{t_{3}}\left( \dfrac{C}{h^2}+\dfrac{\left\Vert\mathrm{e}^{\Phi(\cdot,t)}G(\cdot,t)\right\Vert_{0}^2}{\|F(\cdot,t)\|_{0}^2} \right)\mathrm{d}t\\
    &+M\int_{t_{1}}^{t_{2}}\dfrac{\left\Vert  \mathrm{e}^{\Phi(\cdot,t) /2}G(\cdot,t)\right\Vert_{0}}{\|F(\cdot,t)\|_{0}}\mathrm{d}t+\int_{t_{2}}^{t_{3}}\dfrac{\left\Vert\mathrm{e}^{\Phi(\cdot,t)}G(\cdot,t)\right\Vert_{0}}{\|F(\cdot,t)\|_{0}}\mathrm{d}t.
\end{align*}
Indeed,
\begin{align}\label{eq16}
    \left( (T-t+h)^{1+C_{0}}N(t) \right)^{\prime}&=(T-t+h)^{1+C_{0}}\dfrac{\mathrm{d}}{\mathrm{d}t}\mathcal{N}(t)-(1+C_{0})(T-t+h)^{C_{0}}\mathcal{N}(t) \nonumber\\
    &=(T-t+h)^{1+C_{0}} \bigg[ \dfrac{\mathrm{d}}{\mathrm{d}t}\mathcal{N}(t)-\dfrac{1+C_{0}}{T-t+h} \mathcal{N}(t)\bigg]\nonumber\\
    &\leq (T-t+h)^{1+C_{0}} \bigg[ \dfrac{C}{h^2}+\dfrac{\left\Vert\mathrm{e}^{\Phi(\cdot,t)/2} G(\cdot,t)\right\Vert_{0}^2}{\|F(\cdot,t)\|_{0}^2}\bigg].
\end{align}
We integrate \eqref{eq16} over $(t,t_{2})$, we obtain
\begin{align*}
    &(T-t_{2}+h)^{1+C_{0}}\mathcal{N}(t_{2})-(T-t+h)^{1+C_{0}}\mathcal{N}(t)\\
    &\leq \int_{t}^{t_{2}}(T-\tau+h)^{1+C_{0}}\bigg( \dfrac{C}{h^2}+\dfrac{\|\mathrm{e}^{\Phi/2}G(\cdot,\tau)\|_{0}^2}{\|F(\cdot,\tau)\|_{0}^2} \bigg)\mathrm{d}\tau.
\end{align*}
Therefore,
\begin{align*}
    &\bigg( \dfrac{T-t_{2}+h}{T-t+h} \bigg)^{1+C_{0}}\mathcal{N}(t_{2})-\mathcal{N}(t)\\
    &\leq \int_{t}^{t_{2}}\bigg( \dfrac{T-\tau+h}{T-t+h} \bigg)^{1+C_{0}}\bigg( \dfrac{C}{h^2}+\dfrac{\|\mathrm{e}^{\Phi(\cdot,\tau)/2}G(\cdot,\tau)\|_{0}^2}{\|F(\cdot,\tau)\|_{0}^2} \bigg)\mathrm{d}\tau.
\end{align*}
Since $t_{1}\leq t \leq \tau \leq t_{2}$, then $\dfrac{T-\tau+h}{T-t+h}\leq 1$, Hence
\begin{align*}
    \bigg(\dfrac{T-t_{2}+h}{T-t+h}\bigg)^{1+C_{0}}\mathcal{N}(t_{2})-\int_{t_{1}}^{t_{2}}\bigg( \dfrac{C}{h^2}+\dfrac{\|\mathrm{e}^{\Phi(\cdot,t)/2}G(\cdot,t\|_{0}^2}{\|F(\cdot,t)\|_{0}^2} \bigg)\mathrm{d}t\leq \mathcal{N}(t).
\end{align*}
Using the first inequality in \eqref{dIN}, we obtain 
\begin{align*}
    &\dfrac{1}{2}\dfrac{\mathrm{d}}{\mathrm{d}t}\|F\|_{0}^2+\bigg[ \dfrac{(T-t_{2}+h)^{1+C_{0}}}{(T-t+h)^{1+C_{0}}}\mathcal{N}(t_{2})-\int_{t_{1}}^{t_{2}}\bigg( \dfrac{C}{h^2}+\dfrac{\|\mathrm{e}^{\Phi(\cdot,t)/2}G(\cdot,t)\|_{0}^2}{\|F(\cdot,t)\|_{0}^2} \bigg)\mathrm{d}t\bigg] \|F\|_{0}^2\\
    & \leq\dfrac{1}{2}\dfrac{\mathrm{d}}{\mathrm{d}t}\|F\|_{0}^2+N(t)\|F\|_{0}^2\\
    &\leq \|\mathrm{e}^{\Phi/2}G(\cdot,t)\|_{0}\|F(\cdot,t)\|_{0}.
\end{align*}
Consequently, we obtain
\begin{align*}
    &\dfrac{1}{2}\dfrac{\mathrm{d}}{\mathrm{d}t}\|F(\cdot,t)\|_{0}^2+\bigg[ \dfrac{(T-t_{2}+h)^{1+C_{0}}}{(T-t+h)^{1+C_{0}}}\mathcal{N}(t_{2})-\int_{t_{1}}^{t_{2}}\bigg( \dfrac{C}{h^2}+\dfrac{\|\mathrm{e}^{\Phi(\cdot,t)/2}G(\cdot,t)\|_{0}^2}{\|F(\cdot,t)\|_{0}^2} \bigg)\mathrm{d}t\\
    &-\dfrac{\|\mathrm{e}^{\Phi/2}G(\cdot,t)\|_{0}}{\|F(\cdot,t)\|_{0}}\bigg] \|F(\cdot,t)\|_{0}^2\leq 0.
\end{align*}
To simplify, we note 
\[
\alpha(t)=\bigg[ \dfrac{(T-t_{2}+h)^{1+C_{0}}}{(T-t+h)^{1+C_{0}}}\mathcal{N}(t_{2})-\int_{t_{1}}^{t_{2}}\bigg( \dfrac{C}{h^2}+\dfrac{\|\mathrm{e}^{\Phi(\cdot,t)/2}G(\cdot,t)\|_{0}^2}{\|F(\cdot,t)\|_{0}^2} \bigg)\mathrm{d}t-\dfrac{\|\mathrm{e}^{\Phi/2}G(\cdot,t)\|_{0}}{\|F(\cdot,t)\|_{0}}\bigg].
\]
 Then, we solve
\[
\dfrac{\mathrm{d}}{\mathrm{d}t}\|F(\cdot,t)\|_{0}^2+2\alpha(t)\|F\|_{0}^2\leq 0.
\]
Since $F(\cdot,t)$ does not vanish, we have 
\begin{align*}
    \dfrac{\mathrm{d}}{\mathrm{d}t}\|F(\cdot,t)\|_{0}^2+2\alpha(t)\|F\|_{0}^2\leq 0 &\Longleftrightarrow \dfrac{\dfrac{\mathrm{d}}{\mathrm{d}t}\|F(\cdot,t)\|_{0}^2}{\|F\|_{0}^2}\leq -2\alpha(t)\\
    &\Longleftrightarrow \dfrac{\mathrm{d}}{\mathrm{d}t}\left( \ln\left( \|F(\cdot,t)\|_{0}^2 \right) \right)\leq -2\alpha(t).
\end{align*}
We integrate the last inequality over $(t_{1},t_{2})$, we obtain
\[
\ln \|F(\cdot,t_{2})\|_{0}^2-\ln \|F(\cdot,t_{1})\|_{0}^2\leq -2 \int_{t_{1}}^{t_{2}}\alpha(t)\mathrm{d}t.
\]
Thus,
\[
\dfrac{\|F(\cdot,t_{2})\|_{0}^2}{\|F(\cdot,t_{1})\|_{0}^2}\leq \mathrm{e}^{-2\int_{t_{1}}^{t_{2}}\alpha(t)\mathrm{d}t}.
\]
Using the expression of $\alpha(t)$, we obtain
\begin{align}\label{Inn1}
    &\mathrm{e}^{2\mathcal{N}(t_{2})\displaystyle\int_{t_{1}}^{t_{2}} \bigg( \dfrac{T-t_{2}+h}{T-t+h} \bigg)^{1+C_{0}}\mathrm{d}t}\nonumber\\
    &\leq \dfrac{\|F(\cdot,t_{1})\|_{0}^2}{\|F(\cdot,t_{2})\|_{0}^2}\mathrm{e}^{ 2(t_{2}-t_{1})^2\dfrac{C}{h^2}+2(t_{2}-t_{1})\displaystyle\int_{t_{1}}^{t_{2}}\dfrac{\|\mathrm{e}^{\Phi(\cdot,t)/2}G(\cdot,t)\|_{0}^2}{\|F(\cdot,t)\|_{0}^2}\mathrm{d}t}\nonumber\\
    &\qquad\qquad\qquad\times\mathrm{e}^{2\displaystyle\int_{t_{1}}^{t_{2}}\dfrac{\|\mathrm{e}^{\Phi(\cdot,t)/2}G(\cdot,t)\|_{0}}{\|F(\cdot,t)\|_{0}} \mathrm{d}t}
\end{align}
Similarly, we prove that 
\begin{align}\label{Inn2}
    \|F(\cdot,t_{2})\|_{0}^2&\leq \|F(\cdot,t_{3})\|_{0}^2 \mathrm{e}^{2 \mathcal{N}(t_{2})\displaystyle\int_{t_{2}}^{t_{3}}\left(  \dfrac{t-t_{2}+h}{T-t+h}\right)^{1+C_{0}}\mathrm{d}t }\nonumber\\
    & \mathrm{e}^ { 2\displaystyle\int_{t_{2}}^{t_{3}}\left( \dfrac{T-t_{2}+h}{T-t+h} \right)^{1+C_{0}}\mathrm{d}t\int_{t_{2}}^{t_{3}}\left( \dfrac{C}{h^2}+\dfrac{\|\mathrm{e}^{\Phi(\cdot,t)/2}G(\cdot,t)\|_{0}^2}{\|F(\cdot,t)\|_{0}^2} \right)\mathrm{d}t } \nonumber\\
    & \mathrm{e}^{ 2\displaystyle\int_{t_{2}}^{t_{3}} \dfrac{\|\mathrm{e}^{\Phi(\cdot,t)/2}G(\cdot,t)\|_{0}}{\|F(\cdot,t)\|_{0}}\mathrm{d}t }.
\end{align}
By \eqref{Inn1} and \eqref{Inn2}, we obtain
\begin{align*}
    \|F(\cdot,t_{2})\|_{0}^2&\leq \|F(\cdot,t_{3})\|_{0}^2 \bigg( \dfrac{\|F(\cdot,t_{1})\|_{0}^2}{\|F(\cdot,t_{2})\|_{0}^2} \bigg)^M \mathrm{e}^{2M \displaystyle\int_{t_{1}}^{t_{2}}\dfrac{\|\mathrm{e}^{\Phi(\cdot,t)/2}G(\cdot,t)\|_{0}}{\|F(\cdot,t)\|_{0}} \mathrm{d}t}\\
    &\times\mathrm{e}^{ 2M(t_{2}-t_{1})^2 \dfrac{C}{h^2}+2M(t_{2}-t_{1})\displaystyle\int_{t_{1}}^{t_{2}}\dfrac{\|\mathrm{e}^{\Phi(\cdot,t)/2}G(\cdot,t)\|_{0}^2}{\|F(\cdot,t)\|_{0}^2}\mathrm{d}t }\\
    &\times\mathrm{e}^ { 2\displaystyle\int_{t_{2}}^{t_{3}}\left( \dfrac{T-t_{2}+h}{T-t+h} \right)^{1+C_{0}}\mathrm{d}t\int_{t_{2}}^{t_{3}}\left( \dfrac{C}{h^2}+\dfrac{\|\mathrm{e}^{\Phi(\cdot,t)/2}G(\cdot,t)\|_{0}^2}{\|F(\cdot,t)\|_{0}^2} \right)\mathrm{d}t } \\
    &\times \mathrm{e}^{ 2\displaystyle\int_{t_{2}}^{t_{3}} \dfrac{\|\mathrm{e}^{\Phi(\cdot,t)/2}G(\cdot,t)\|_{0}}{\|F(\cdot,t)\|_{0}}\mathrm{d}t }.
\end{align*}
This leads to the inequality \eqref{Inn0}.
\smallskip

\textbf{Step 5.} \textbf{Estimating the following term $\dfrac{\|\mathrm{e}^{\Phi(\cdot,t)}G\|_{0}^2}{\| F \|_{0}^2}$}.
The following Lemma is the main key to estimate our desired term.
\begin{lemma}\label{Lm2.1}
For any $T-\theta\leq t\leq T$ one has 
\begin{equation}
    \dfrac{\|U_{0}\|^2}{\|u(\cdot,t)\|_{L^2(\Omega\cap B_{x_{0},(1+\delta)R})}^2+\|u_{\Gamma}(\cdot,t)\|_{L^2(\Gamma\cap B_{x_{0},(1+\delta)R)}}^2}\leq \mathrm{e}^{(1+\delta)\delta \frac{R^2}{2\theta}},
\end{equation}
where 
\[
\dfrac{1}{\theta}=\dfrac{2}{(\delta R)^2}\ln \left( 2 \mathrm{e}^{R^2(1+\dfrac{1}{T})}\dfrac{\|U_{0}\|^2}{\|u(\cdot,T)\|_{L^2(\Omega \cap B_{x_{0},R})}^2+\|u_{\Gamma}(\cdot,T)\|_{L^2(\Gamma\cap B_{x_{0},R})}^2} \right),
\]
with $0< \theta \leq \min \left( 1,\dfrac{T}{2} \right)$.
\end{lemma}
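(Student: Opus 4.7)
The plan is to derive Lemma \ref{Lm2.1} by running the weighted-energy machinery of Steps 1--4 with a carefully tuned choice of the free parameter $h>0$ in the weight $\Phi(x,t)=-s|x-x_0|^2/(4(T-t+h))$. Substituting the definition of $\theta$ into the claimed inequality, one sees that the bound is equivalent to a H\"older-type interpolation estimate of the form
\[
X(T)\ \le\ 2\,\mathrm{e}^{R^2(1+1/T)}\,Y(t)^{\delta/(1+\delta)}\,\|U_0\|^{2/(1+\delta)},\qquad t\in[T-\theta,T],
\]
where $X(T):=\|u(\cdot,T)\|^2_{L^2(\Omega\cap B_{x_0,R})}+\|u_\Gamma(\cdot,T)\|^2_{L^2(\Gamma\cap B_{x_0,R})}$ and $Y(t)$ denotes the analogous quantity on the larger ball $B_{x_0,(1+\delta)R}$. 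This is exactly the three-term interpolation produced by the three-time inequality \eqref{Inn0}.

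The first step is to compare the weighted energy $\|F(\cdot,t)\|_0^2$ with $X(T)$, $Y(t)$ and $\|U_0\|^2$. Since $\chi\equiv 1$ on $B_{x_0,(1+3\delta/2)R}\supset B_{x_0,(1+\delta)R}$ and $\Phi\le 0$, the identity $F=\chi U\mathrm{e}^{\Phi/2}$ together with $\Upsilon(T)=h$ immediately gives
\[
\|F(\cdot,T)\|_0^2\ \ge\ \mathrm{e}^{-sR^2/(4h)}X(T),\qquad \|F(\cdot,0)\|_0^2\ \le\ \|U_0\|^2.
\]
Splitting the integral at time $t$ into the piece on $B_{x_0,(1+\delta)R}$ and its complement inside $\mathrm{supp}\,\chi$, and using the semigroup contraction on the outer piece, one also obtains
\[
\|F(\cdot,t)\|_0^2\ \le\ Y(t)+\mathrm{e}^{-s(1+\delta)^2R^2/(4(T-t+h))}\|U_0\|^2.
\]

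The second step is to apply the log-convexity estimate from Step 4 in the form
\[
\|F(\cdot,T)\|_0^2\ \le\ \big(\|F(\cdot,0)\|_0^2\big)^{1-\mu}\big(\|F(\cdot,t)\|_0^2\big)^{\mu}\,\mathrm{e}^{D'},
\]
for an interpolation parameter $\mu=\mu(t,T,h)\in(0,1)$ and a remainder $D'$ absorbing the cut-off/commutator contributions already controlled by \eqref{14.11}. Substituting the bounds from the previous step yields
\[
\mathrm{e}^{-sR^2/(4h)}X(T)\ \le\ \|U_0\|^{2(1-\mu)}\big(Y(t)+\mathrm{e}^{-s(1+\delta)^2R^2/(4(T-t+h))}\|U_0\|^2\big)^{\mu}\,\mathrm{e}^{D'}.
\]
Choosing $h=\theta$, $\mu=\delta/(1+\delta)$, and $s$ of order $\delta^2$ with $\theta$ as in the statement, the prefactor $\mathrm{e}^{-sR^2/(4h)}\mathrm{e}^{D'}$ combines precisely into the factor $1/(2\mathrm{e}^{R^2(1+1/T)})^{1+\delta}$ dictated by the definition of $\theta$, while on $[T-\theta,T]$ the bound $T-t+h\le 2\theta$ forces the auxiliary exponential to be dominated by $Y(t)$ (or absorbed in the constant). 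Rearranging then produces the stated inequality.

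The main obstacle is the simultaneous tuning of the parameters $s$, $h=\theta$ and $\mu$. The explicit logarithmic shape of $\theta$ is dictated precisely by the requirement that $\mathrm{e}^{-sR^2/(4h)}X(T)$ balance the correct power of $\|U_0\|^2$ on the right, while simultaneously keeping the outer-ball ``leak'' term negligible on $[T-\theta,T]$. Tracking the constants carefully enough to recover the exact factor $2\mathrm{e}^{R^2(1+1/T)}$ inside the logarithm defining $\theta$, and verifying the compatibility constraint $0<\theta\le\min(1,T/2)$, is the main technical burden; once the parameters are fixed, the rest reduces to bookkeeping in Steps 1--4.
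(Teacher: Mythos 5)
Your reduction of the lemma to the H\"older-type interpolation $X(T)\le 2\,\mathrm{e}^{R^2(1+1/T)}\,Y(t)^{\delta/(1+\delta)}\|U_0\|^{2/(1+\delta)}$ is a correct reformulation, and the elementary weighted-norm comparisons in your first step are fine. The strategy itself, however, has a genuine gap: it is circular. The three-time inequality \eqref{Inn0} that you invoke carries the remainder $D$, which contains the time integrals of the ratio $\|\mathrm{e}^{\Phi/2}G\|_{0}^{2}/\|F\|_{0}^{2}$ generated by the cut-off $\chi$. The estimate \eqref{14.11} controls only the Carleman commutator term $\langle-(\mathcal{S}^{\prime}+[\mathcal{S},\mathcal{A}])F,F\rangle_{0}$ together with the boundary contributions; it says nothing about this source term. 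In the paper that ratio is bounded in Step~5 precisely \emph{by means of} Lemma \ref{Lm2.1}, so invoking \eqref{Inn0} with a ``controlled remainder $D'$'' in order to prove Lemma \ref{Lm2.1} presupposes the very estimate you are trying to establish. Without an independent bound on $\|\mathrm{e}^{\Phi/2}G\|_{0}^{2}/\|F\|_{0}^{2}$, the quantity $D'$ is not under control and the argument does not close; the asserted exact matching of the prefactor with $2\,\mathrm{e}^{R^2(1+1/T)}$ is also never computed.

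The lemma is in fact proved by a much more elementary, self-contained argument that avoids the Carleman machinery entirely. One checks directly from \eqref{s1.3}, integrating by parts in $\Omega$ and on $\Gamma$, that $t\mapsto I(t)=\|U\mathrm{e}^{\zeta/2}\|^{2}$ is nonincreasing for $\zeta(x,t)=-|x-x_0|^{2}/(2(T-t+\varepsilon))$, which satisfies $\partial_t\zeta+\tfrac12|\nabla\zeta|^{2}\le 0$; indeed $I'(t)\le -\tfrac12\int_\Omega|u\nabla\zeta+2\nabla u|^{2}\mathrm{e}^{\zeta}\,\d x-\tfrac12\int_\Gamma|u_\Gamma\nabla_\Gamma\zeta+2\nabla_\Gamma u_\Gamma|^{2}\mathrm{e}^{\zeta}\,\d S\le 0$. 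Comparing $I(T)$ restricted to $B_{x_0,R}$ with $I(t)$ split into $B_{x_0,(1+\delta)R}$ and its complement gives $X(T)\le \mathrm{e}^{R^{2}/(2\varepsilon)}Y(t)+\mathrm{e}^{R^{2}/(2\varepsilon)}\mathrm{e}^{-R^{2}(1+\delta)^{2}/(2(T-t+\varepsilon))}\|U_0\|^{2}$; for $T-\varepsilon\delta\le t\le T$ the product of the two exponentials is at most $\mathrm{e}^{-\delta R^{2}/(2\varepsilon)}$, and choosing $\varepsilon$ so that $\mathrm{e}^{-\delta R^{2}/(2\varepsilon)}\|U_0\|^{2}=\tfrac12\mathrm{e}^{-R^{2}(1+1/T)}X(T)$ (which is exactly the stated definition of $\theta=\varepsilon\delta$) allows this term to be absorbed into the left-hand side, yielding the claim. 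If you wish to keep your route, you must first supply an independent bound on $\|\mathrm{e}^{\Phi/2}G\|_{0}^{2}/\|F\|_{0}^{2}$ --- at which point you will have reproved the lemma anyway.
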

\begin{proof}
Since $U$ is solution of \eqref{s1.3}, then $U(x,t)=\mathrm{e}^{t\mathcal{A}}U_{0}$ with $U_{0}\in \mathbb{L}^2$ non-null initial data. Recall that for any locally lipschitz function $\zeta (x,t)$ such that $\partial_{t}\zeta +\dfrac{1}{2}\mid\nabla \zeta\mid^2\leq 0$, the function $t\longmapsto I(t)=\|U \mathrm{e}^{\zeta/2}\|^2$ is a decreasing function in t. We can choose for example $\zeta(x,t)=\dfrac{-\mid x-x_{0}\mid^2}{2(T-t+h)}$.

Now, we calculate the derivative of the function $I$, we have
\begin{align*}
    I^{\prime}(t)&=\dfrac{\mathrm{d}}{\mathrm{d}t}\bigg(  \int_{\Omega} \mid u\mid^2 \mathrm{e}^{\zeta}\mathrm{d}x+ \int_{\Gamma} \mid u_{\Gamma}\mid^2 \mathrm{e}^{\zeta}\mathrm{d}S  \bigg)\\
    &=\int_{\Omega} \mid u\mid^2 \zeta_{t} \mathrm{e}^{\zeta}\mathrm{d}x+2\int_{\Omega} u u_{t}\mathrm{e}^{\zeta} \mathrm{d}x+\int_{\Gamma}\mid u_{\Gamma}\mid^2 \zeta_{t} \mathrm{e}^{\zeta}\mathrm{d}S\\
    &+2\int_{\Gamma} u_{\Gamma} u_{\Gamma t}\mathrm{e}^{\zeta} \mathrm{d}S.
\end{align*}
Using \eqref{s1.3}$_{1}$ and \eqref{s1.3}$_{2}$, we obtain
\begin{align*}
    I^{\prime}(t)&=\int_{\Omega} \mid u\mid^2 \zeta_{t} \mathrm{e}^{\zeta}\mathrm{d}x+2\int_{\Omega} u \Delta u\mathrm{e}^{\zeta} \mathrm{d}x+\int_{\Gamma}\mid u_{\Gamma}\mid^2 \zeta_{t} \mathrm{e}^{\zeta}\mathrm{d}S\\
    &+2\int_{\Gamma} u_{\Gamma} \Delta_{\Gamma}u_{\Gamma }\mathrm{e}^{\zeta} \mathrm{d}S-2\int_{\Gamma} u_{\Gamma} \partial_{\nu}u\mathrm{e}^{\zeta} \mathrm{d}S\\
    &=\int_{\Omega} \mid u\mid^2 \zeta_{t} \mathrm{e}^{\zeta}\mathrm{d}x+2\int_{\Gamma} u\partial_{\nu}u \mathrm{e}^{\zeta}\mathrm{d}S-\int_{\Omega }\nabla (2u\mathrm{e}^{\zeta})\cdot \nabla u \mathrm{d}x\\
    &+\int_{\Gamma} \mid u_{\Gamma}\mid^2\zeta_{t}\mathrm{e}^{\zeta}\mathrm{d}S-\int_{\Gamma}\langle \nabla_{\Gamma}(2u_{\Gamma}\mathrm{e}^{\zeta}),\nabla_{\Gamma}u_{\Gamma} \rangle_{\Gamma}\mathrm{d}S-\int_{\Gamma}2u_{\Gamma}\mathrm{e}^{\zeta}\partial_{\nu}u \mathrm{d}S\\
    &=\int_{\Omega}\mid u\mid^2 \zeta_{t} \mathrm{e}^{\zeta}\mathrm{d}x - 2\int_{\Omega} \nabla \zeta\cdot \nabla u u \mathrm{e}^{\zeta} \mathrm{d}x -2 \int_{\Omega}\mid\nabla u\mid^2 \mathrm{e}^{\zeta} \mathrm{d}x\\
    &+\int_{\Gamma}\zeta_{t}\mid u_{\Gamma}(x,t)\mid^2\mathrm{e}^{\zeta}\mathrm{d}S \mathrm{d}t-2\int_{\Gamma}\langle \nabla_{\Gamma}\zeta, \nabla_{\Gamma}u_{\Gamma} \rangle_{\Gamma}u_{\Gamma}\mathrm{e}^{\zeta}\mathrm{d}S\\
    &-2\int_{\Gamma}\mid\nabla_{\Gamma}u_{\Gamma}\mid^2 \mathrm{e}^{\zeta}\mathrm{d}S.
\end{align*}
Using the fact that $\partial_{t}\zeta+\dfrac{1}{2}\mid\nabla\zeta\mid^2\leq 0$, we obtain
\begin{align*}
    I^{\prime}(t)&\leq \dfrac{-1}{2}\int_{\Omega}\mid\nabla\zeta\mid^2\mid u\mid^2\mathrm{e}^{\zeta}\mathrm{d}x-2\int_{\Omega}\nabla \zeta\cdot\nabla u u \mathrm{e}^{\zeta}\mathrm{d}x-2 \int_{\Omega}\mid\nabla u\mid^2\mathrm{e}^{\zeta} \mathrm{d}x\\
    &\dfrac{-1}{2}\int_{\Gamma}\mid\nabla\zeta\mid^2\mid u_{\Gamma}\mid^2\mathrm{e}^{\zeta}\mathrm{d}S-2\int_{\Gamma}\langle \nabla_{\Gamma}\zeta, \nabla _{\Gamma}u_{\Gamma} \rangle_{\Gamma}\mathrm{e}^{\zeta}\mathrm{d}S-2 \int_{\Gamma}\mid\nabla_{\Gamma}u_{\Gamma}\mid^2 \mathrm{e}^{\zeta}\mathrm{d}S\\
    &\leq \dfrac{-1}{2}\int_{\Omega}\left( \mid\nabla \zeta\mid^2\mid u\mid^2+4\nabla \zeta\cdot \nabla u u+4\mid\nabla u\mid^2 \right)\mathrm{e}^{\zeta}\mathrm{d}x\\
    &-\dfrac{1}{2}\int_{\Gamma}\left( \mid\nabla \zeta\mid^2\mid u_{\Gamma}\mid^2+4\langle\nabla_{\Gamma}\zeta , \nabla_{\Gamma} u_{\Gamma}\rangle_{\Gamma} u_{\Gamma}+4\mid\nabla_{\Gamma} u_{\Gamma}\mid^2 \right)\mathrm{e}^{\zeta}\mathrm{d}S.
\end{align*}
Sine $\mid\nabla_{\Gamma}\zeta\mid^2\leq \mid\nabla \zeta\mid^2$, then
\begin{align*}
    I^{\prime}(t)&\leq \dfrac{-1}{2}\int_{\Omega}\left( \mid\nabla \zeta\mid^2\mid u\mid^2+4\nabla \zeta\cdot \nabla u u+4\mid\nabla u\mid^2 \right)\mathrm{e}^{\zeta}\mathrm{d}x\\
    &-\dfrac{1}{2}\int_{\Gamma}\left( \mid\nabla_{\Gamma} \zeta\mid^2\mid u_{\Gamma}\mid^2+4\langle\nabla_{\Gamma}\zeta , \nabla_{\Gamma} u_{\Gamma}\rangle_{\Gamma} u_{\Gamma}+4\mid\nabla_{\Gamma} u_{\Gamma}\mid^2 \right)\mathrm{e}^{\zeta}\mathrm{d}S\\
    &\leq \dfrac{-1}{2}\bigg(  \int_{\Omega}\mid u\nabla \zeta+2\nabla u\mid^2 \mathrm{e}^{\zeta}\mathrm{d}x+\int_{\Gamma}\mid u_{\Gamma}\nabla_{\Gamma} \zeta+2\nabla_{\Gamma} u_{\Gamma}\mid^2 \mathrm{e}^{\zeta}\mathrm{d}S \bigg)\\
    &\leq 0.
\end{align*}
Since $t\longmapsto I(t)$ is a decreasing function, then 
\begin{align*}
    &\int_{\Omega}\mid u(x,T)\mid^2 \mathrm{e}^{\frac{-\mid x-x_{0}\mid^2}{2\varepsilon}}\mathrm{d}x+\int_{\Gamma}\mid u_{\Gamma}(x,T)\mid^2\mathrm{e}^{\frac{-\mid x-x_{0}\mid^2}{2\varepsilon}}\mathrm{d}S\\
    & \leq\int_{\Omega}\mid u(x,t)\mid^2\mathrm{e}^{\frac{-\mid x-x_{0}\mid^2}{2(T-t+\varepsilon)}}\mathrm{d}x+\int_{\Gamma}\mid u_{\Gamma}(x,t)\mid^2\mathrm{e}^{\frac{-\mid x-x_{0}\mid^2}{2(T-t+\varepsilon)}}\mathrm{d}S.
\end{align*}
It implies that
\begin{align*}
    &\|u(\cdot,T)\|^{2}_{L^2(\Omega \cap B_{x_{0},R})}+\|u_{\Gamma}(\cdot,T)\|^{2}_{L^2(\Gamma \cap B_{x_{0},R})}\\
    &\leq\mathrm{e}^{\frac{R^2}{2\varepsilon}}\int_{\Omega \cap B_{x_{0},R}}\mid u(x,T)\mid^2\mathrm{e}^{\frac{-\mid x-x_{0}\mid^2}{2\varepsilon}}\mathrm{d}x+\mathrm{e}^{\frac{R^2}{2\varepsilon}}\int_{\Gamma \cap B_{x_{0},R}}\mid u_{\Gamma}(x,T)\mid^2\mathrm{e}^{\frac{-\mid x-x_{0}\mid^2}{2\varepsilon}}\mathrm{d}S\\
    &\leq\mathrm{e}^{\frac{R^2}{2\varepsilon}}\int_{\Omega}\mid u(x,t)\mid^2\mathrm{e}^{\frac{-\mid x-x_{0}\mid^2}{2(T-t+\varepsilon)}}\mathrm{d}x+\mathrm{e}^{\frac{R^2}{2\varepsilon}}\int_{\Gamma}\mid u_{\Gamma}(x,t)\mid^2\mathrm{e}^{\frac{-\mid x-x_{0}\mid^2}{2(T-t+\varepsilon)}}\mathrm{d}S\\
    &\leq\mathrm{e}^{\frac{R^2}{2\varepsilon}}\int_{\Omega\cap B_{x_{0},(1+\delta)R}}\mid u(x,t)\mid^2\mathrm{e}^{\frac{-\mid x-x_{0}\mid^2}{2(T-t+\varepsilon)}}\mathrm{d}x\\
    &+\mathrm{e}^{\frac{R^2}{2\varepsilon}}\int_{\Omega\setminus(\Omega\cap B_{x_{0},(1+\delta)R})}\mid u(x,t)\mid^2\mathrm{e}^{\frac{-\mid x-x_{0}\mid^2}{2(T-t+\varepsilon)}}\mathrm{d}x\\
    &+\mathrm{e}^{\frac{R^2}{2\varepsilon}}\int_{\Gamma\cap B_{x_{0},(1+\delta)R}}\mid u_{\Gamma}(x,t)\mid^2\mathrm{e}^{\frac{-\mid x-x_{0}\mid^2}{2(T-t+\varepsilon)}}\mathrm{d}S\\
    &+\mathrm{e}^{\frac{R^2}{2\varepsilon}}\int_{\Gamma\setminus\Gamma\cap B_{x_{0},(1+\delta)R})}\mid u_{\Gamma}(x,t)\mid^2\mathrm{e}^{\frac{-\mid x-x_{0}\mid^2}{2(T-t+\varepsilon)}}\mathrm{d}S\\
    &\leq\mathrm{e}^{\frac{R^2}{2\varepsilon}}\|u(\cdot,t)\|^{2}_{L^2(\Omega\cap B_{x_{0},(1+\delta)R})}+\mathrm{e}^{\frac{R^2}{2\varepsilon}}\|u_{\Gamma}(\cdot,t)\|^{2}_{L^2(\Gamma\cap B_{x_{0},(1+\delta)R})}\\
    &+\mathrm{e}^{\frac{R^2}{2\varepsilon}} \mathrm{e}^{\frac{-R^2(1+\delta)^2}{2(T-t+\varepsilon)}}\|U_{0}\|^2.
\end{align*}
Choose $\dfrac{T}{2}\leq T-\varepsilon\delta\leq t\leq T$ with $0<\varepsilon\leq \dfrac{T}{2}$ and $\delta \in (0,1)$. We obtain
\begin{align*}
    \mathrm{e}^{\frac{R^2}{2\varepsilon}} \mathrm{e}^{\frac{-R^2(1+\delta)^2}{2(T-t+\varepsilon)}}&=\mathrm{e}^{\frac{R^2}{2\varepsilon}-\frac{-R^2(1+\delta)^2}{2(T-t+\varepsilon)}}\\
    &=\mathrm{e}^{\frac{R^2(T-t+\varepsilon)-R^2\varepsilon-2R^2\varepsilon\delta-R^2\varepsilon\delta^2)}{2\varepsilon(T-t+\varepsilon)}}\\
    &=\mathrm{e}^{\frac{R^2[(T-\varepsilon\delta)-t]}{2\varepsilon(T-t+\varepsilon)}}\mathrm{e}^{\frac{-R^2\varepsilon\delta(1+\delta)}{2\varepsilon(T-t+\varepsilon)}}.
\end{align*}
Since $T-\varepsilon\delta\leq t$, then $\mathrm{e}^{\frac{R^2[(T-\varepsilon\delta)-t]}{2\varepsilon(T-t+\varepsilon)}}\leq 1$ and $\varepsilon(1+\delta)\geq T-t+\varepsilon$, hence 
\[ \dfrac{-R^2\varepsilon\delta(1+\delta)}{2\varepsilon(T-t+\varepsilon)}\leq \dfrac{-R^2\delta}{2\varepsilon}. \]
Thus
\[ \mathrm{e}^{\frac{R^2}{2\varepsilon}} \mathrm{e}^{\frac{-R^2(1+\delta)^2}{2(T-t+\varepsilon)}}\leq\mathrm{e}^{\frac{-R^2\delta}{2\varepsilon}}  .\]
Then, we obtain that
\begin{align}\label{Inn4}
    &\|u(\cdot,T)\|_{L^2(\Omega\cap B_{x_{0},R})}^{2}+\|u_{\Gamma}(\cdot,T)\|_{L^2(\Gamma\cap B_{x_{0},R_{0}})}^2\nonumber\\
    &\leq \mathrm{e}^{\frac{R^2}{2\varepsilon}}\|u(\cdot,T)\|_{L^2(\Omega\cap B_{x_{0},(1+\delta)R})}^{2}+\mathrm{e}^{\frac{R^2}{2\varepsilon}}\|u_{\Gamma}(\cdot,T)\|_{L^2(\Gamma\cap B_{x_{0},(1+\delta)R})}^{2}\nonumber\\
    &+\mathrm{e}^{\frac{-\delta R^2}{2\varepsilon}}\|U_{0}\|^2.
\end{align}
Choose
\[ \varepsilon=\dfrac{\delta R^2}{2\ln \bigg( 2\mathrm{e}{R^2\left( 1+\frac{1}{T} \right)}\frac{\|U_{0}\|^2}{\|u(\cdot,T)\|_{L^2(\Omega\cap B_{x_{0},R)}}^2+\|u_{\Gamma}(\cdot,T)\|_{L^2(\Gamma\cap B_{x_{0},R})}^2} \bigg)} .\]
That is 
\[
\mathrm{e}^{\frac{-\delta R^2}{2\varepsilon}}\|U_{0}\|^2=\dfrac{1}{2}\mathrm{e}^{-R^2\left( 1+\frac{1}{T} \right)}\left(\|u(\cdot,T)\|_{L^2(\Omega\cap B_{x_{0},R})}^2+\|u_{\Gamma}(\cdot,T)\|_{L^2(\Gamma\cap B_{x_{0},R})}^2 \right).
\]
By using \eqref{Inn4}, we obtain
\begin{align*}
    &\bigg(  1-\dfrac{1}{2}\mathrm{e}^{-R^2\left( 1+\frac{1}{T} \right)} \bigg)\bigg( \|u(\cdot,T)\|_{L^2(\Omega\cap B_{x_{0},R})}^2+\|u_{\Gamma}(\cdot,T)\|_{L^2(\Gamma\cap B_{x_{0},R})}^2  \bigg)\\
    &\leq \mathrm{e}^{\frac{R^2}{2\varepsilon}}\|u(\cdot,T)\|_{L^2(\Omega\cap B_{x_{0},(1+\delta)R})}^{2}+\mathrm{e}^{\frac{R^2}{2\varepsilon}}\|u_{\Gamma}(\cdot,T)\|_{L^2(\Gamma\cap B_{x_{0},(1+\delta)R})}^{2},
\end{align*}
and 
\begin{align*}
    \mathrm{e}^{\frac{-\delta R^2}{2\varepsilon}}\|U_{0}\|^2 \leq& \dfrac{1}{2}\mathrm{e}^{-R^2\left( 1+\frac{1}{T} \right)}\bigg(  1-\dfrac{1}{2}\mathrm{e}^{-R^2\left( 1+\frac{1}{T} \right)} \bigg)^{-1}\mathrm{e}^{\frac{R^2}{2\varepsilon}}\\
    &\left( \|u(\cdot,t)\|_{L^2(\Omega\cap B_{x_{0},(1+\delta)R})}^2+\|u_{\Gamma}(\cdot,t)\|_{L^2(\Gamma\cap B_{x_{0},(1+\delta)R})}^2 \right).
\end{align*}
Since $\dfrac{1}{2}\mathrm{e}^{-R^2\left( 1+\frac{1}{T} \right)}\leq \dfrac{1}{2}$, then $\bigg(  1-\dfrac{1}{2} \mathrm{e}^{-R^2\left( 1+\frac{1}{T} \right)}\bigg)^{-1}\leq 2$, thus
\[
\dfrac{1}{2}\mathrm{e}^{-R^2\left( 1+\frac{1}{T} \right)}\bigg(  1-\dfrac{1}{2}\mathrm{e}^{-R^2\left( 1+\frac{1}{T} \right)} \bigg)^{-1}\leq 1
\]
Consequently,
\begin{equation*}
    \dfrac{\|U_{0}\|^2}{\|u(\cdot,t)\|_{L^2(\Omega\cap B_{x_{0},(1+\delta)R})}^2+\|u_{\Gamma}(\cdot,t)\|_{L^2(\Gamma\cap B_{x_{0},(1+\delta)R)}}^2}\leq \mathrm{e}^{(1+\delta)\delta \frac{R^2}{2\theta}},
\end{equation*}
with $\dfrac{1}{\theta}=\dfrac{1}{\varepsilon\delta}$.
\end{proof}
Let us get back to our goal to estimate the term $\dfrac{\|\mathrm{e}^{\Phi/2}G\|_{0}^2}{\|F\|_{0}^2}$.
\begin{align*}
    \dfrac{\|\mathrm{e}^{\Phi/2}G\|_{0}^2}{\|F\|_{0}^2}&=\dfrac{\|\mathrm{e}^{\Phi/2}g\|_{L^2(\Omega\cap B_{x_{0},R_{0}})}^{2}+\|\mathrm{e}^{\Phi/2}g_{\Gamma}\|_{L^2(\Gamma\cap B_{x_{0},R_{0}})}^{2}}{\|f\|_{L^2(\Omega\cap B_{x_{0},R_{0}})}^2+\|f_{\Gamma}\|_{L^2(\Gamma\cap B_{x_{0},R_{0}})}^{2}}\\
    &=\dfrac{ \displaystyle\int_{\Omega\cap B_{x_{0},(1+2\delta)R}}\mid-2\nabla\chi\cdot \nabla u-\Delta\chi u\mid^2 \mathrm{e}^{\Phi}\mathrm{d}x}{\displaystyle\int_{\Omega\cap B_{x_{0},(1+2\delta)R}}\mid\chi u\mid^2\mathrm{e}^{\Phi}\mathrm{d}x+\int_{\Gamma\cap B_{x_{0},(1+2\delta)R}}\mid\chi u_{\Gamma}\mid^2\mathrm{e}^{\Phi}\mathrm{d}S}\\
    &+\dfrac{\displaystyle\int_{\Gamma\cap B_{x_{0},(1+2\delta)R}}  \mid-2\langle \nabla_{\Gamma}\chi,\nabla_{\Gamma}u_{\Gamma} \rangle_{\Gamma}-\Delta_{\Gamma}\chi u_{\Gamma}+\partial_{\nu}\chi u_{\Gamma}\mid^2\mathrm{d}S}{\displaystyle\int_{\Omega\cap B_{x_{0},(1+2\delta)R}}\mid\chi u\mid^2\mathrm{e}^{\Phi}\mathrm{d}x+\int_{\Gamma\cap B_{x_{0},(1+2\delta)R}}\mid\chi u_{\Gamma}\mid^2\mathrm{e}^{\Phi}\mathrm{d}S}.
\end{align*}
Using the fact that $\chi=1$ on $\left\lbrace x; \;\; \mid x-x_{0}\mid\leq (1+3\delta/2)R \right\rbrace$, we obtain
\begin{align}\label{Inn5}
    &\dfrac{\|\mathrm{e}^{\Phi/2}G\|_{0}^2}{\|F\|_{0}^2}\leq \dfrac{ \displaystyle\int_{\Omega\cap \lbrace (1+3\delta/2)R\leq \mid x-x_{0}\mid\leq R_{0} \rbrace}\mid-2\nabla\chi\cdot \nabla u-\Delta\chi u\mid^2 \mathrm{e}^{\Phi}\mathrm{d}x}{\displaystyle\int_{\Omega\cap B_{x_{0},(1+\delta)R}}\mid\chi u\mid^2\mathrm{e}^{\Phi}\mathrm{d}x+\int_{\Gamma\cap B_{x_{0},(1+\delta)R}}\mid\chi u_{\Gamma}\mid^2\mathrm{e}^{\Phi}\mathrm{d}S}\nonumber\\
     &+\dfrac{\displaystyle\int_{\Gamma\cap \lbrace (1+3\delta/2)R\leq \mid x-x_{0}\mid\leq R_{0} \rbrace}  \mid-2\langle \nabla_{\Gamma}\chi,\nabla_{\Gamma}u_{\Gamma} \rangle_{\Gamma}-\Delta_{\Gamma}\chi u_{\Gamma}+\partial_{\nu}\chi u_{\Gamma}\mid^2\mathrm{d}S}{\displaystyle\int_{\Omega\cap B_{x_{0},(1+2\delta)R}}\mid\chi u\mid^2\mathrm{e}^{\Phi}\mathrm{d}x+\int_{\Gamma\cap B_{x_{0},(1+2\delta)R}}\mid\chi u_{\Gamma}\mid^2\mathrm{e}^{\Phi}\mathrm{d}S}\nonumber\\
     &\leq \dfrac{C\exp\left( \max\limits_{(1+3\delta/2)R\leq\mid x-x_{0}\mid\leq R_{0}} \Phi(x)\right)}{\exp\left(\min\limits_{\mid x-x_{0}\mid\leq (1+\delta)R}\Phi(x)\right)}\nonumber\\
     &\times\dfrac{\bigg( \|U(\cdot,t)\|^2+ \| \nabla u(\cdot,t) \|_{L^2(\Omega)}^2+\|\nabla_{\Gamma}u_{\Gamma}(\cdot,t)\|_{L^2(\Gamma)}^2 \bigg)}{\left(\|u(\cdot,t)\|_{L^2(\Omega\cap B_{x_{0},(1+\delta)R})}^2+\|u_{\Gamma}(\cdot,t)\|_{L^2(\Gamma\cap B_{x_{0},(1+\delta)R})}^2\right)}\nonumber\\
     &\leq C\exp \bigg[ -\min \limits_{\mid x-x_{0}\mid\leq(1+\delta)R}\Phi(x,t)+\max\limits_{(1+3\delta/2)R\leq\mid x-x_{0}\mid\leq R_{0}}\Phi(x,t) \bigg]\nonumber\\
     &\qquad\qquad\qquad\times\dfrac{\|U(\cdot,t)\|^2+ \| \nabla u(\cdot,t) \|_{L^2(\Omega)}^2+\|\nabla_{\Gamma}u_{\Gamma}(\cdot,t)\|_{L^2(\Gamma)}^2}{\|u(\cdot,t)\|_{L^2(\Omega\cap B_{x_{0},(1+\delta)R})}^2+\|u_{\Gamma}(\cdot,t)\|_{L^2(\Gamma\cap B_{x_{0},(1+\delta)R})}^2}.
\end{align}
Since $t\longmapsto\|U(\cdot,t)\|^2$ is a decreasing function, then 
\begin{equation}\label{Inn6}
    \|U(\cdot,t)\|^2\leq \|U_{0}\|^2.
    \end{equation}
One the other hand, the operator
\begin{equation*}
\mathbf{A}=\begin{pmatrix} \Delta & 0\\ -\partial_\nu & \Delta_\Gamma \end{pmatrix}
\end{equation*}
generates an analytic $C_0$-semigroup on $\mathbb{L}^2$. Then there is a constant $M>0$ such that
\begin{align*}
    \left\Vert (-\mathbf{A})^{\frac{1}{2}} \mathrm{e}^{t\mathbf{A}} \right\Vert\leq \dfrac{M}{t^{\frac{1}{2}}}.
\end{align*}
Hence,
\begin{align*}
\left\Vert (-\mathbf{A})^{\frac{1}{2}} \mathrm{e}^{t\mathbf{A}} U_{0} \right\Vert\leq \dfrac{M}{t^{\frac{1}{2}}}\left\Vert U_{0}  \right\Vert.
\end{align*}
Therefore
\begin{align*}
\left\Vert (-\mathbf{A})^{\frac{1}{2}} U(\cdot,t)\right\Vert^2\leq \dfrac{M^2}{t}\left\Vert U_{0}  \right\Vert^2.
\end{align*}
We compute $\left\Vert (-\mathbf{A})^{\frac{1}{2}} U(\cdot,t)\right\Vert^2$, we have 
\begin{align*}
    \left\Vert (-\mathbf{A})^{\frac{1}{2}} U(\cdot,t)\right\Vert^2&=\left\langle  (-\mathbf{A})^{\frac{1}{2}} U(\cdot,t), (-\mathbf{A})^{\frac{1}{2}} U(\cdot,t) \right\rangle\\
    &=\left\langle -\mathbf{A} U(\cdot,t),U(\cdot,t) \right\rangle\\
    &=\int_{\Omega} -\Delta u(x,t) u(x,t)\mathrm{d}x+\int_{\Gamma}\partial_{\nu}u(x,t)u_{\Gamma}(x,t)\mathrm{d}S\\
    &-\int_{\Gamma}\Delta_{\Gamma}u_{\Gamma}(x,t)u_{\Gamma}(x,t)\mathrm{d}S\\
    &=\int_{\Gamma}\partial_{\nu}u(x,t)u_{\Gamma}(x,t)\mathrm{d}S+\int_{\Omega}\mid\nabla u(x,t)\mid^2\mathrm{d}x\\
    &+\int_{\Gamma}\partial_{\nu}u(x,t)u_{\Gamma}(x,t)\mathrm{d}S+\int_{\Gamma}\mid\nabla_{\Gamma}u_{\Gamma}(x,t)\mid^2\mathrm{d}S\\
    &=\|\nabla u(\cdot,t)\|_{L^2(\Omega)}^2+\|\nabla _{\Gamma}u_{\Gamma}(\cdot,t)\|_{L^2(\Gamma)}^2.
\end{align*}
Consequently, we obtain
\begin{align}\label{Inn7}
    \|\nabla u(\cdot,t)\|_{L^2(\Omega)}^2+\|\nabla _{\Gamma}u_{\Gamma}(\cdot,t)\|_{L^2(\Gamma)}^2\leq \dfrac{M^2}{t}\|U_{0}\|^2.
\end{align}
Combining \eqref{Inn5}, \eqref{Inn6} and \eqref{Inn7}, we obtain 
\begin{align*}
    \dfrac{\|\mathrm{e}^{\Phi/2}G\|_{0}^2}{\|F\|_{0}^2}&\leq \exp \bigg[-\min \limits_{\mid x-x_{0}\mid\leq(1+\delta)R}\Phi(x,t)+\max\limits_{(1+3\delta/2)R\leq\mid x-x_{0}\mid\leq R_{0}}\Phi(x,t) \bigg]\\
    &\times\dfrac{C\left( 1+\frac{1}{T} \right)\|U_{0}\|^2}{\|u(\cdot,t)\|_{L^2(\Omega\cap B_{x_{0},(1+\delta)R})}^2+\|u_{\Gamma}(\cdot,t)\|_{L^2(\Gamma\cap B_{x_{0},(1+\delta)R})}^2}.
\end{align*}
Using Lemma \ref{Lm2.1}, we get
\begin{align}
    \dfrac{\|\mathrm{e}^{\Phi/2}G\|_{0}^2}{\|F\|_{0}^2}\leq \exp \bigg[ &-\min \limits_{\mid x-x_{0}\mid\leq(1+\delta)R}\Phi(x,t)+\max\limits_{(1+3\delta/2)R\leq\mid x-x_{0}\mid\leq R_{0}}\Phi(x,t) \bigg] \nonumber\\
    &\times C\left( 1+\dfrac{1}{t} \right)\mathrm{e}^{(1+\delta)\delta\frac{R^2}{2\theta}}.
\end{align}
On the other hand, we have $\varphi(x)=\dfrac{-\mid x-x_{0}\mid^2}{4}$. Then 
\begin{align*}
    &\max\limits_{(1+3\delta/2)R\leq\mid x-x_{0}\mid\leq R_{0}}\varphi(x,t)-\min \limits_{\mid x-x_{0}\mid\leq(1+\delta)R}\varphi(x,t)\\
    &=\dfrac{-1}{4}\left( 1+\dfrac{3\delta}{2} \right)^2 R^2+\dfrac{1}{4}(1+\delta)^2 R^2\\
    &=\dfrac{R^2}{4}\bigg[  (1+\delta)^2-\left( 1+\dfrac{3}{2}\delta \right)^2 \bigg]
    &<0.
\end{align*}
In order that
\begin{align*}
    &-\min \limits_{\mid x-x_{0}\mid\leq(1+\delta)R}\Phi(x,t)+\max\limits_{(1+3\delta/2)R\leq\mid x-x_{0}\mid\leq R_{0}}\Phi(x,t)+(1+\delta)\delta\dfrac{R^2}{2\theta}\\
    &=\dfrac{-s}{T-t+h}\left\vert \min \limits_{\mid x-x_{0}\mid\leq(1+\delta)R}\varphi(x,t)-\max\limits_{(1+3\delta/2)R\leq\mid x-x_{0}\mid\leq R_{0}}\varphi(x,t) \right\vert+(1+\delta)\delta\dfrac{R^2}{2\theta}.
\end{align*}
For $T-2\ell h\leq t$ and by taking 
\[ h\leq \dfrac{\theta}{(1+2\ell)h (1+\delta)\delta R^2}\left\vert \min \limits_{\mid x-x_{0}\mid\leq(1+\delta)R}\varphi(x,t)-\max\limits_{(1+3\delta/2)R\leq\mid x-x_{0}\mid\leq R_{0}}\varphi(x,t) \right\vert:=\theta C_{\ell,\phi}, \]
we obtain
\begin{align*}
    &-\min \limits_{\mid x-x_{0}\mid\leq(1+\delta)R}\Phi(x,t)+\max\limits_{(1+3\delta/2)R\leq\mid x-x_{0}\mid\leq R_{0}}\Phi(x,t)+(1+\delta)\delta\dfrac{R^2}{2\theta}\\
    &\leq (1+\delta)\delta\dfrac{R^2}{2\theta}-\dfrac{1}{(1+2\ell)h}\left\vert \min \limits_{\mid x-x_{0}\mid\leq(1+\delta)R}\varphi(x,t)-\max\limits_{(1+3\delta/2)R\leq\mid x-x_{0}\mid\leq R_{0}}\varphi(x,t) \right\vert\\
    &\leq 0.
\end{align*}
By using \eqref{Inn7}, we obtain
\begin{align*}
    \dfrac{\left\Vert  \mathrm{e}^{\Phi/2}G(\cdot,t) \right\Vert_{0}^2}{\|F(\cdot,t)\|_{0}^2}\leq C\left( 1+\dfrac{1}{t} \right),
\end{align*}
for any $T-2\ell h\leq t$ and any $h\leq \theta\min\left(C_{\ell,\varphi},\dfrac{1}{2}\right)$.
\smallskip

\textbf{Step 6.}  We choose $t_{3}=T$, $t_{2}=T-\ell h$, $t_{1}=T-2\ell h$ and $\ell>1$, such that $0<2\ell
h<T$ and $h\leq \theta \min\left(C_{\ell,\varphi},\dfrac{1}{2\ell}\right)$. We make use of \eqref{Inn0}, we obtain
\begin{equation*}
    \left( \left\Vert F(\cdot,T-\ell h) \right\Vert_{0}^2 \right)^{1+M}\leq  \left( \left\Vert F(\cdot,T-2\ell h) \right\Vert_{0}^2\right)^M \|F(\cdot,T)\|_{0}^2 \mathrm{e}^{2D_{\ell}},
\end{equation*}
where $M_{\ell}=\dfrac{(\ell+1)^{c_{0}}-1}{1-\left(\frac{\ell+1}{2\ell+1}\right)^{C_{0}}}$. Consequently, we obtain
\begin{align*}
    &\bigg( \left\Vert \chi u(\cdot,T-\ell h)\mathrm{e}^{\Phi(\cdot,T-\ell h)/2}\right\Vert_{L^2\left( \Omega\cap B_{x_{0},(1+2\delta)R} \right)}\\
    &\qquad\qquad\qquad+\left\Vert \chi u_{\Gamma}(\cdot,T-\ell h)\mathrm{e}^{\Phi(\cdot,T-\ell h)/2} \right\Vert_{L^2(\Gamma\cap B_{x_{0},(1+2\delta)R})} \bigg)^{1+M_{\ell}}\\
    &\leq \left\Vert \chi U(\cdot,T-2\ell h)\mathrm{e}^{\Phi(T-2\ell h)/2} \right\Vert_{0}^{M_{\ell}}\left\Vert \chi u(\cdot,T)\mathrm{e}^{\Phi(\cdot,T)/2} \right\Vert_{0}\mathrm{e}^{C_{\ell,T}},
\end{align*}
where $C_{\ell,T}$ is a constant depending in $\ell$ and $T$. By using the assumptions on the function $\chi$ in \eqref{Ass0}, we obtain
\begin{align*}
    &\bigg( \left\Vert  u(\cdot,T-\ell h)\mathrm{e}^{\Phi(\cdot,T-\ell h)/2}\right\Vert_{L^2\left( \Omega\cap B_{x_{0},(1+\delta)R} \right)}\\
    &\qquad\qquad\qquad+\left\Vert  u_{\Gamma}(\cdot,T-\ell h)\mathrm{e}^{\Phi(\cdot,T-\ell h)/2} \right\Vert_{L^2(\Gamma\cap B_{x_{0},(1+\delta)R})} \bigg)^{1+M_{\ell}}\\
    &\leq \left\Vert  U(\cdot,T-2\ell h)\mathrm{e}^{\Phi(T-2\ell h)/2} \right\Vert_{0}^{M_{\ell}}\left\Vert \chi u(\cdot,T)\mathrm{e}^{\Phi(\cdot,T)/2} \right\Vert_{0}\mathrm{e}^{C_{\ell,T}}.
\end{align*}
Therefore
\begin{align}\label{Inn9}
    &\exp \bigg(  \frac{s}{2h}\frac{1+M_{\ell}}{\ell+1}\min\limits_{x\in \overline{\Omega}\cap \overline{B}_{x_{0},(1+\delta)R}}\varphi(x) \bigg)\nonumber\\
    &\qquad\times\bigg(  \left\Vert  u(\cdot,T-\ell h)\right\Vert_{L^2\left( \Omega\cap B_{x_{0},(1+\delta)R} \right)}+\left\Vert  u_{\Gamma}(\cdot,T-\ell h)\right\Vert_{L^2(\Gamma\cap B_{x_{0},(1+\delta)R})} \bigg)^{1+M_{\ell}}\nonumber\\
    &\leq \mathrm{e}^{C_{\ell,T}}\exp\bigg( \frac{s}{2h}\frac{M_{\ell}}{1+2\ell}\max\limits_{x\in\overline{\Omega}\cap\overline{B}_{x_{0},R_{0}}}\varphi(x) \bigg)\|U(\cdot,T-2\ell h)\|^{M_{\ell}}\left\Vert \chi u(\cdot,T)\mathrm{e}^{\Phi(\cdot,T)/2} \right\Vert_{0}.
\end{align}
Let $\omega_{0}$ be a nonempty subset of $B_{x_{0},r}$, we have 
\begin{align*}
    &\left\Vert \chi u(\cdot,T)\mathrm{e}^{\Phi(\cdot,T)/2} \right\Vert_{0}^{2}\nonumber\\
    &=\int_{\Omega\cap B_{x_{0},R_{0}}}\mid\chi(x)u(x,T)\mid^2\mathrm{e}^{\Phi(x,T)}\mathrm{d}x+\int_{\Gamma\cap B_{x_{0},R_{0}}}\mid\chi(x)u_{\Gamma}(x,T)\mid^2\mathrm{e}^{\Phi(x,T)}\mathrm{d}S\nonumber\\
    &=\int_{\omega_{0}}\mid u(x,T)\mid^2\mathrm{e}^{\Phi(x,T)}\mathrm{d}x+\int_{\left.  (\Omega\cap B_{x_{0},R_{0}})\right\backslash \omega_{0}}\mid\chi(x)u(x,T)\mid^2\mathrm{e}^{\Phi(x,T)}\mathrm{d}x\nonumber\\
    &+\int_{\Gamma\cap B_{x_{0},R_{0}}}\mid\chi(x)u_{\Gamma}(x,T)\mid^2\mathrm{e}^{\Phi(x,T)}\mathrm{d}S\nonumber\\
    &\leq \exp\left[ \max\limits_{x\in\overline{\omega_{0}}}
\Phi\left(  x,T\right)  \right]\|u(\cdot,T)\|_{L^2(\omega_{0})}^2\nonumber\\
     &+\exp\left[\max\limits_{x\in\overline{  \Omega\cap B_{x_{0},R_{0}}\left. 
\right\backslash \omega_{0}}}\Phi\left(  x,T\right)  \right]\|\chi u(\cdot,T)\|_{L^2(\left.  (\Omega\cap B_{x_{0},R_{0}})\right\backslash \omega_{0})}^2\nonumber\\
     &+\exp\left[\max\limits_{x\in\Gamma\cap B_{x_{0},R_{0}}}\Phi\left(x,T\right)  \right]\left\Vert
u_{\Gamma}\left(  x,T\right)  \right\Vert_{L^2(\Gamma\cap B_{x_{0},R_{0}})}^2.
\end{align*}
Using $\max\limits_{x\in\overline{  \Omega\cap B_{x_{0},R_{0}}\left. 
\right\backslash \omega_{0}}}\Phi\left(  x,T\right)\leq\max\limits_{x\in\overline{  \Omega}\cap \overline{B}_{x_{0},R_{0}}\left. 
\right\backslash \omega_{0}}\Phi\left(  x,T\right)$, and the fact that
\[
\|\chi u(\cdot,T)\|_{L^2(\left.  (\Omega\cap B_{x_{0},R_{0}})\right\backslash \omega_{0})}^2+\left\Vert
u_{\Gamma}\left(  x,T\right)  \right\Vert_{L^2(\Gamma\cap B_{x_{0},R_{0}})}^2\leq \|U(\cdot,T)\|^2\leq \| U_0 \|^2,
\]
we obtain
\begin{align}\label{Inn10}
    \left\Vert \chi u(\cdot,T)\mathrm{e}^{\Phi(\cdot,T)/2} \right\Vert_{0}^2&\leq \exp\left[ \max\limits_{x\in\overline{\omega_{0}}}
\Phi\left(  x,T\right)  \right]\|u(\cdot,T)\|_{L^2(\omega_{0})}^2\nonumber\\
&+\exp\left[ \max\limits_{x\in\overline{  \Omega}\cap \overline{B}_{x_{0},R_{0}}\left. 
\right\backslash \omega_{0}}\Phi\left(x,T\right) \right]\|U_{0}\|^2 .
\end{align}
Combining \eqref{Inn9} and \eqref{Inn10}, we obtain
\begin{align}\label{Inn11}
    &\bigg(\left\Vert  u(\cdot,T-\ell h)\right\Vert_{L^2\left( \Omega\cap B_{x_{0},(1+\delta)R} \right)}+\left\Vert  u_{\Gamma}(\cdot,T-\ell h)\right\Vert_{L^2(\Gamma\cap B_{x_{0},(1+\delta)R})}\bigg)^{1+M_{\ell}}\nonumber\\
    &\leq \mathrm{e}^{C_{\ell,T}}\exp \bigg[  \frac{s}{2h}\bigg(-\frac{1+M_{\ell}}{\ell+1}\min\limits_{x\in \overline{\Omega}\cap \overline{B}_{x_{0},(1+\delta)R}}\varphi(x) +\frac{M_{\ell}}{1+2\ell}\max\limits_{x\in\overline{\Omega}\cap\overline{B}_{x_{0},R_{0}}}\varphi(x)\nonumber\\
    &\qquad\qquad\qquad+\max\limits_{x\in\overline{\omega_{0}}}
\varphi\left(  x,T\right) \bigg) \bigg] \|U_0\|^{M_{\ell}}\|u(\cdot,T)\|_{L^2(\omega_{0})}\nonumber\\
    &+\mathrm{e}^{C_{\ell,T}}\exp \bigg[  \frac{s}{2h}\bigg(-\frac{1+M_{\ell}}{\ell+1}\min\limits_{x\in \overline{\Omega}\cap \overline{B}_{x_{0},(1+\delta)R}}\varphi(x) +\frac{M_{\ell}}{1+2\ell}\max\limits_{x\in\overline{\Omega}\cap\overline{B}_{x_{0},R_{0}}}\varphi(x)\nonumber\\
    &\qquad\qquad\qquad+\max\limits_{x\in\overline{  \Omega}\cap \overline{B}_{x_{0},R_{0}}\left. 
\right\backslash \omega_{0}}\varphi\left(  x,T\right) \bigg) \bigg] \|U_0\|^{1+M_{\ell}}.
\end{align}
Since $0<\ell h \leq 2\ell h \leq \theta$, then $T-\theta\leq T-\ell h\leq T$, by using Lemma \ref{Lm2.1}, we obtain that
\begin{align}\label{Inn12}
    \|U_{0}\|\leq \mathrm{e}^{\frac{1}{\theta}C_{\delta,R}}\bigg(\left\Vert  u(\cdot,T-\ell h)\right\Vert_{L^2\left( \Omega\cap B_{x_{0},(1+\delta)R} \right)}+\left\Vert  u_{\Gamma}(\cdot,T-\ell h)\right\Vert_{L^2(\Gamma\cap B_{x_{0},(1+\delta)R})}\bigg),
\end{align}
where $C_{\delta,R}=\dfrac{(1+\delta)\delta R^2}{4}$.

Combining \eqref{Inn11}, \eqref{Inn12} and the fact that $\|U(\cdot,T)\| \leq \|U_{0}\|$, we obtain
\begin{align}
    &\bigg(\mathrm{e}^{\frac{-1}{\theta}C_{\delta,R}}\|U(\cdot,T)\|\bigg)^{1+M_{\ell}}\nonumber\\
    &\leq \mathrm{e}^{C_{\ell,T}}\exp \bigg[  \frac{s}{2h}\bigg(-\frac{1+M_{\ell}}{\ell+1}\min\limits_{x\in \overline{\Omega}\cap \overline{B}_{x_{0},(1+\delta)R}}\varphi(x) +\frac{M_{\ell}}{1+2\ell}\max\limits_{x\in\overline{\Omega}\cap\overline{B}_{x_{0},R_{0}}}\varphi(x)\nonumber\\
    &\qquad\qquad\qquad+\max\limits_{x\in\overline{\omega_{0}}}
\varphi\left(  x,T\right) \bigg) \bigg]\times \|U(\cdot,0)\|^{M_{\ell}}\|u(\cdot,T)\|_{L^2(\omega_{0})}\nonumber\\
    &+\mathrm{e}^{C_{\ell,T}}\exp \bigg[  \frac{s}{2h}\bigg(-\frac{1+M_{\ell}}{\ell+1}\min\limits_{x\in \overline{\Omega}\cap \overline{B}_{x_{0},(1+\delta)R}}\varphi(x) +\frac{M_{\ell}}{1+2\ell}\max\limits_{x\in\overline{\Omega}\cap\overline{B}_{x_{0},R_{0}}}\varphi(x)\nonumber\\
    &\qquad\qquad\qquad+\max\limits_{x\in\overline{  \Omega}\cap \overline{B}_{x_{0},R_{0}}\left. 
\right\backslash \omega_{0}}\varphi\left(  x,T\right) \bigg) \bigg] 
     \|U_0\|^{1+M_{\ell}}.
\end{align}
Since $\varphi(x)\leq 0$, $\forall\, x\in \overline{\Omega}$, $x_{0}\in \omega_{0}$ and $\varphi(x_{0})=0$,  then $\max\limits_{x\in\overline{\Omega}\cap\overline{B}_{x_{0},R_{0}}}%
\varphi\left(  x\right)=0$.  
Hence, 
\begin{align*}
    &-\frac{1+M_{\ell}}{\ell+1}\min\limits_{x\in \overline{\Omega}\cap \overline{B}_{x_{0},(1+\delta)R}}\varphi(x) +\frac{M_{\ell}}{1+2\ell}\max\limits_{x\in\overline{\Omega}\cap\overline{B}_{x_{0},R_{0}}}\varphi(x)+\max\limits_{x\in\overline{\omega_{0}}}
\varphi\left(  x,T\right)\\
    &=-\frac{1+M_{\ell}}{\ell+1}\min\limits_{x\in \overline{\Omega}\cap \overline{B}_{x_{0},(1+\delta)R}}\varphi(x) +\max\limits_{x\in\overline{\omega_{0}}}
\varphi\left(  x,T\right)< 0,
\end{align*}
for $\ell \geq 1$ sufficiently large. Therefore, there exist $C_{1} >0$ and $C_{2}>0$ such that for any $h>0$, with $h\leq \theta \min\left( C_{\ell,\varphi},\dfrac{1}{2\ell} \right):=\theta C_{3}$,
\begin{align*}
    \bigg(\mathrm{e}^{\frac{-1}{\theta}C_{\delta,R}}\|U(\cdot,T)\|\bigg)^{1+M_{\ell}}&\leq \mathrm{e}^{C_{\ell,T}} \mathrm{e}^{C_{1}\frac{1}{h}}\left\Vert U_{0}    \right\Vert
^{M_{\ell}}\left\Vert u\left(  \cdot,T\right)  \right\Vert _{L^{2}\left(
\omega_{0}\right)  }\\
&\qquad\qquad\qquad\qquad+ \mathrm{e}^{C_{\ell,T}} \mathrm{e}^{-C_{2}\frac{1}{h}}\left\Vert U_0
\right\Vert ^{1+M_{\ell}}.
\end{align*}
On the other hand, for any $h\geq \theta C_{3}$, we have $\mathrm{e}^{C_{\ell,T}}\mathrm{e}^{\frac{C_{2}}{C_{3}}\frac{1}{\theta}}\mathrm{e}^{-c_{2}\frac{1}{h}}\geq 1$. Therefore
\begin{align*}
    \left( \mathrm{e}^{\frac{-1}{\theta}C_{\delta,R}}\|U(\cdot,T)\| \right)^{1+M_{\ell}}&\leq \|U(\cdot,T)\|^{1+M_{\ell}}\\
    &\leq\|U_{0}\|^{1+M_{\ell}}\\
    &\leq\mathrm{e}^{C_{\ell,T}}\mathrm{e}^{\frac{C_{2}}{C_{3}}\frac{1}{\theta}}\mathrm{e}^{-C_{2}\frac{1}{h}}\|U_{0}\|^{1+M_{\ell}}\\
    &\leq \mathrm{e}^{C_{\ell,T}} \mathrm{e}^{C_{1}\frac{1}{h}}\left\Vert U_{0}  \right\Vert
^{M_{\ell}}\left\Vert u\left(  \cdot,T\right)  \right\Vert _{L^{2}\left(
\omega_{0}\right)}\\
&\qquad\qquad\qquad\qquad+ \mathrm{e}^{C_{\ell,T}} \mathrm{e}^{\frac{C_{2}}{C_{3}}\frac{1}{\theta}}\mathrm{e}^{-C_{2}\frac{1}{h}}\left\Vert U_{0}
\right\Vert ^{1+M_{\ell}}.
\end{align*}
Consequently, for any $h>0$
\begin{align*}
    \left( \mathrm{e}^{\frac{-1}{\theta}C_{\delta,R}}\|U(\cdot,T)\| \right)^{1+M_{\ell}}\leq&\mathrm{e}^{C_{\ell,T}} \mathrm{e}^{C_{1}\frac{1}{h}}\left\Vert U_{0}  \right\Vert
^{M_{\ell}}\left\Vert u\left(  \cdot,T\right)  \right\Vert _{L^{2}\left(
\omega_{0}\right)}\\
&\qquad\qquad\qquad\qquad+ \mathrm{e}^{C_{\ell,T}} \mathrm{e}^{\frac{C_{2}}{C_{3}}\frac{1}{\theta}}\mathrm{e}^{-C_{2}\frac{1}{h}}\left\Vert U_{0}
\right\Vert ^{1+M_{\ell}}.
\end{align*}
We choose $h>0$ such that 
\[
\mathrm{e}^{C_{\ell,T}}\mathrm{e}^{\frac{C_{2}}{C_{3}}\frac{1}{\theta}}\mathrm{e}^{-C_{2}\frac{1}{h}}\left\Vert U_{0}  \right\Vert ^{1+M_{\ell}}=\frac{1}{2}\bigg(\mathrm{e}^{\frac{-1}{\theta}C_{\delta,R}}\left\Vert U\left(
\cdot,T\right)  \right\Vert ^{1+M_{\ell}}\bigg)\text{ .}%
\]
That is,
\[
\mathrm{e}^{C_{2}\frac{1}{h}}=2\mathrm{e}^{C_{\ell,T}}\mathrm{e}^{\frac{C_{2}}{C_{3}}\frac{1}{\theta}}\left(  \frac{\left\Vert
U_{0}  \right\Vert }{\mathrm{e}^{\frac{-1}{\theta}C_{\delta,R}}\left\Vert U\left(  \cdot,T\right)
\right\Vert }\right)^{1+M_{\ell}}.%
\]
It follows that
\begin{align*}
\dfrac{1}{2}\bigg(\mathrm{e}^{\frac{-1}{\theta}C_{\delta,R}}\left\Vert U\left(  \cdot,T\right)  \right\Vert\bigg)^{1+M_{\ell}}&\leq \mathrm{e}^{C_{\ell,T}}\left(
2\mathrm{e}^{C_{\ell,T}}\mathrm{e}^{C_{2}\frac{2\ell}{T}}\left(  \frac{\left\Vert U\left(\cdot,0\right)
\right\Vert }{\left\Vert U\left(\cdot,T\right)  \right\Vert }\right)
^{1+M_{\ell}}\right)  ^{\frac{C_{1}}{C_{2}}}\\
&\qquad\qquad\qquad\qquad\times\left\Vert U_{0}  \right\Vert ^{M_{\ell}}\left\Vert u\left(  \cdot,T\right)
\right\Vert _{L^{2}\left(\omega_0\right)}.
\end{align*}
Therefore,
\begin{align*}
\left(\mathrm{e}^{\frac{-1}{\theta}C_{\delta,R}}\left\Vert U\left(  \cdot,T\right)  \right\Vert\right)^{1+M_{\ell}+\left(  1+M_{\ell}\right)  \frac{C_{1}}{C_{2}}} \leq& 2^{1+\frac{C_{1}}{C_{2}}}%
\mathrm{e}^{C_{\ell,T}\left(1+\frac{C_{1}}{C_{2}}\right)}\mathrm{e}^{\frac{C_{1}}{C_{3}}\frac{1}{\theta}}\\
&\times\left(\left\Vert U_{0} \right\Vert\right)
^{M_{\ell}+\left(  1+M_{\ell}\right)  \frac{C_{1}}{C_{2}}}\left\Vert u\left(
\cdot,T\right)  \right\Vert _{L^{2}\left(  \omega_{0}\right)  }.
\end{align*}
Hence, for any $c^{*} \geq M_{\ell}+(1+M_{\ell})\frac{C_{1}}{C_{2}}$, we obtain
\[
\|U(\cdot,T)\|^{1+c^{*}}\leq 2^{1+\frac{C_{1}}{C_{2}}}%
\mathrm{e}^{C_{\ell,T}\left(1+\frac{C_{1}}{C_{2}}\right)}\mathrm{e}^{\left(\frac{C_{1}}{C_{3}}+c^{*}C_{\delta,R}\right)\frac{1}{\theta}}\|U_{0}\|^{c^{*}}\|u(\cdot,T)\|_{L^2(\omega_{0})},
\]
Then, for some $c>0$, we obtain
\[
\|U(\cdot,T)\|^{1+c}\leq c \mathrm{e}^{c\frac{1}{\theta}}\|U_{0}\|^c\|u(\cdot,T)\|_{L^2(\omega_{0})}.
\]
Recall the definition of $\theta$ in Lemma
\ref{Lm2.1},
\begin{align*}
\dfrac{1}{\theta}=\dfrac{2}{(\delta R)^2}\ln \left( 2 \mathrm{e}^{R^2\left(1+\frac{1}{T}\right)}\dfrac{\|U_{0}\|^2}{\|u(\cdot,T)\|_{L^2(\Omega \cap B_{x_{0},R})}^2+\|u_{\Gamma}(\cdot,T)\|_{L^2(\Gamma\cap B_{x_{0},R})}^2} \right).
\end{align*}
Consequently, we obtain
\begin{align*}
\|U(\cdot,T)\|^{1+c}\leq& c \bigg[ 2 \mathrm{e}^{R^2\left(1+\frac{1}{T}\right)}\dfrac{\|U_{0}\|^2}{\|u(\cdot,T)\|_{L^2(\Omega \cap B_{x_{0},R})}^2+\|u_{\Gamma}(\cdot,T)\|_{L^2(\Gamma\cap B_{x_{0},R})}^2} \bigg]\\
&\times\|U_{0}\|^{c}\|u(\cdot,T)\|_{L^2(\omega_{0})}.
\end{align*}
Hence, for some $k>0$, we obtain
\[
\left(\|u(\cdot,T)\|_{L^2(\Omega \cap B_{x_{0},R})}^2+\|u_{\Gamma}(\cdot,T)\|_{L^2(\Gamma\cap B_{x_{0},R})}^2\right)^{1+k}\leq c\mathrm{e}^{k\left( 1+\frac{1}{T} \right)}\|U_{0}\|^k\|u(\cdot,T)\|_{L^2(\omega_{0})}.
\]
This  provides  the desired inequality.\endproof
\subsection{Proof of Theorem \ref{theo1.1}} First, we show that, for any compact sets $\Theta_{1}$ and $\Theta_{2}$ with non-empty interior in $\Omega$ there are constants $C>0$ and $\beta\in (0,1)$ such that 
\begin{equation}\label{Inn13}
\|u(\cdot,T)\|_{L^2(\Theta_{1})}\leq \mathrm{e}^{C\left(  1+\frac{1}{T}\right)} \|u(\cdot,T)\|_{L^2(\Theta_{1})}^{\sigma_1}\|U_{0}\|^{1-\sigma_1}.
\end{equation}
Indeed, since $\Theta_1$ is a compact in $\Omega$, there are $R>0$ and finitely many points $x_{1},\ldots,x_{m}$ such that $\Theta_{1}\subset \bigcup\limits_{i=1,\ldots,m}B(x_{i},R)$ and $B(x_{i},R)\subset\Omega$, for any $i \in \lbrace 1,\ldots,m \rbrace$. Next, for each $i\in \lbrace 1,\ldots,m \rbrace$, we choose $q\in (0,R)$ and finitely many points $a_{0},\cdots,a_{l}$ with the following properties:
\[
\left\{\begin{array}{l}
x_{i}=a_{l} \\
\Theta_{2} \supset B\left(a_{0}, q\right) \\
B\left(a_{j+1}, q / 2\right) \subset B\left(a_{j}, q\right) \quad \forall j=0, \ldots, l-1, \\
B\left(a_{j}, 3 q\right) \subset \Omega \quad \forall j=0, \ldots, l .
\end{array}\right.
\]
Using Theorem \eqref{theo1.2}, there are $\sigma_1,\beta_l, \ldots, \beta_{0}$ such that 
\begin{align*}
    \|u(\cdot,T)\|_{L^2(B_{x_i,R})}&\leq \mathrm{e}^{C\left(1+\frac{1}{T}\right)} \|u(\cdot,T)\|_{L^2\left(B_{x_i,\frac{q}{2}}\right)}^{\beta_l}\|U_0\|^{1-\beta_{l}}\\
    &\leq \mathrm{e}^{C\left(1+\frac{1}{T}\right)} \|u(\cdot,T)\|_{L^2\left(B_{a_l,\frac{q}{2}}\right)}^{\beta_l}\|U_0\|^{1-\beta_{l}}\\
    &\leq\mathrm{e}^{C\left(1+\frac{1}{T}\right)}\|u(\cdot,T)\|_{L^2\left(B_{a_l,q}\right)}^{\beta_l}\|U_0\|^{1-\beta_{l}}\\
    &\leq\mathrm{e}^{C\left(1+\frac{1}{T}\right)}\|U_0\|^{1-\beta_{l}}\bigg( \mathrm{e}^{C\left(1+\frac{1}{T}\right)}\|u(\cdot,T)\|_{L^2\left(B_{a_{l-1},\frac{q}{2}}\right)}^{\beta_{l-1}}\|U_0\|^{1-\beta_{l-1}}\bigg)^{\beta_l}\\
    &\leq\mathrm{e}^{2C\left(1+\frac{1}{T}\right)}\|u(\cdot,T)\|_{L^2\left(B_{a_{l-1},\frac{a}{2}} \right)}^{\beta_{l}\beta_{l-1}}\|U_0\|^{1-\beta_{l}\beta_{l-1}}\\
    & \;\; \vdots\\
    &\leq\mathrm{e}^{C\left(1+\frac{1}{T}\right)}\|u(\cdot,T)\|_{L^2\left(B_{a_0,q}\right)}^{\sigma_1}\|U_0\|^{1-\sigma_{1}}\\
    &\leq\mathrm{e}^{C\left(1+\frac{1}{T}\right)}\|u(\cdot,T)\|_{L^2\left(\Theta_2\right)}^{\sigma_1}\|U_0\|^{1-\sigma_{1}},
\end{align*}
This implies the inequality \eqref{Inn13}.

Next, since $\Omega$ is a bounded domain with a $C^2$ boundary. Then, there is a finite set of triplet $(a_{j},R_j,\delta_j)\in \Omega \times \mathbb{R}_{+}^*\times(0,1]$, $j=1,\ldots,m$, such that 
\begin{equation*}
    \partial \Omega \subset \bigcup_{j=1, \ldots, m} B_{a_{j},\left(1+2 \delta_{j}\right) R_{j}},
\end{equation*}
and $\Omega \cap B_{a_{j},(1+2\delta_{j})R_j}$ is star-shaped with respect to $a_j$ for some $\delta_j$. Hence we apply Theorem \eqref{theo1.2} for $j=1,\ldots,m$. Then, when $\mathcal{V}$ is a neighborhood of $\Gamma$ and $\Theta_{3}$ is a compact set with nonempty interior in $\Omega$, there exist constants $C>0$ and $\sigma_2\in(0,1)$ such that 
\begin{align}\label{Inn14}
    \left\Vert u(\cdot,T) \right\Vert_{L^2(\mathcal{V}\left\backslash \Gamma \right.)}+\|u_{\Gamma}(\cdot,T)\|_{L^2(\Gamma)}\leq \mathrm{e}^{C\left( 1+\frac{1}{T} \right)}\|u(\cdot,T)\|_{L^2(\Theta_{3})}^{\sigma_2}\|U_{0}\|^{1-\sigma_{2}}.
\end{align}
By using \eqref{Inn13}, we obtain
\begin{equation*}
\|u(\cdot,T)\|_{L^2(\Theta_{1})}\|U_0\|^{\sigma_1}\leq \mathrm{e}^{C\left(  1+\frac{1}{T}\right)} \|u(\cdot,T)\|_{L^2(\Theta_{1})}^{\sigma_1}\|U_{0}\|.
\end{equation*}
Therefore
\begin{equation}\label{Inn15}
\|u(\cdot,T)\|_{L^2(\Theta_{1})}^{\frac{1}{\sigma^1}}\|U_0\|\leq \mathrm{e}^{\frac{C}{\sigma_1}\left(  1+\frac{1}{T}\right)} \|u(\cdot,T)\|_{L^2(\Theta_{1})}\|U_{0}\|^{\frac{1}{\sigma_1}}.
\end{equation}
By the same reasoning, we apply \eqref{Inn14} to obtain
\begin{align}\label{Inn16}
    \left(\left\Vert u(\cdot,T) \right\Vert_{L^2(\mathcal{V}\left\backslash \Gamma \right.)}+\|u_{\Gamma}(\cdot,T)\|_{L^2(\Gamma)}\right)^{\frac{1}{\sigma_2}}\|U_0\|\leq \mathrm{e}^{\frac{C}{\sigma_2}\left( 1+\frac{1}{T} \right)}\|u(\cdot,T)\|_{L^2(\Theta_{3})}\|U_{0}\|^{\frac{1}{\sigma_2}}.
\end{align}
We pose $\alpha=\max \left( \frac{1}{\sigma_1},\frac{1}{\sigma_2} \right)$ and multiplying \eqref{Inn15} by $\|u(\cdot,T)\|_{L^2(\Theta_1)}^{\alpha-\frac{1}{\sigma_1}}$, 
\begin{equation*}
\|u(\cdot,T)\|_{L^2(\Theta_{1})}^{\alpha}\|U_0\|\leq \mathrm{e}^{\alpha C\left(  1+\frac{1}{T}\right)} \|u(\cdot,T)\|_{L^2(\Theta_{1})}\|U_{0}\|^{\frac{1}{\sigma_1}}\|u(\cdot,T)\|_{L^2(\Theta_1)}^{\alpha-\frac{1}{\sigma_1}}.
\end{equation*}
Since $\|u(\cdot,T)\|_{L^2(\Theta_1)}\leq \|U(\cdot,T)\|\leq \|U_0\|$, then 
\begin{equation}\label{Inn17}
\|u(\cdot,T)\|_{L^2(\Theta_{1})}^{\alpha}\|U_0\|\leq \mathrm{e}^{\alpha C\left(  1+\frac{1}{T}\right)} \|u(\cdot,T)\|_{L^2(\Theta_{1})}\|U_{0}\|^{\alpha}.
\end{equation}
Similarly, we multiply \eqref{Inn16} by $\left( \|u(\cdot,T)\|_{L^2(\mathcal{V}\left\backslash \Gamma \right.)}+\|u_\Gamma(\cdot,T)\|_{L^2(\Gamma)} \right)^{\alpha-\frac{1}{\sigma_2}}$,
\begin{align}\label{Inn18}
    \left(\left\Vert u(\cdot,T) \right\Vert_{L^2(\mathcal{V}\left\backslash \Gamma \right.)}+\|u_{\Gamma}(\cdot,T)\|_{L^2(\Gamma)}\right)^{\alpha}&\|U_0\|\leq \mathrm{e}^{\alpha C\left( 1+\frac{1}{T} \right)}\|u(\cdot,T)\|_{L^2(\Theta_{3})}\|U_{0}\|^{\frac{1}{\sigma_2}}\nonumber\\
    &\times \left(\left\Vert u(\cdot,T) \right\Vert_{L^2(\mathcal{V}\left\backslash \Gamma \right.)}+\|u_{\Gamma}(\cdot,T)\|_{L^2(\Gamma)}\right)^{\alpha-\frac{1}{\sigma_2}}.
\end{align}
Next, we make use of \eqref{Inn16}, we obtain
\begin{align}\label{Inne18}
    \left\Vert u(\cdot,T) \right\Vert_{L^2(\mathcal{V}\left\backslash \Gamma \right.)}+\|u_{\Gamma}(\cdot,T)\|_{L^2(\Gamma)}\leq \mathrm{e}^{C\left( 1+\frac{1}{T} \right)}\|U_0\|.
\end{align}
By \eqref{Inn18} and \eqref{Inne18}, we obtain that
\begin{align}\label{Inn19}
    \left(\left\Vert u(\cdot,T) \right\Vert_{L^2(\mathcal{V}\left\backslash \Gamma \right.)}+\|u_{\Gamma}(\cdot,T)\|_{L^2(\Gamma)}\right)^{\alpha}&\|U_0\|\leq \mathrm{e}^{(\alpha+1) C\left( 1+\frac{1}{T} \right)}\|u(\cdot,T)\|_{L^2(\Theta_{3})}\|U_{0}\|^{\alpha}.
\end{align}
Combining \eqref{Inn17} and \eqref{Inn19}, we obtain
\begin{align*}
    &\left[ \|u(\cdot,T)\|_{L^2(\Theta_1)}^\alpha+ \left(\left\Vert u(\cdot,T) \right\Vert_{L^2(\mathcal{V}\left\backslash \Gamma \right.)}+\|u_{\Gamma}(\cdot,T)\|_{L^2(\Gamma)}\right)^{\alpha}\right]\|U_0\|\\
    &\leq \mathrm{e}^{(\alpha+1) C\left( 1+\frac{1}{T} \right)}\left( \|u(\cdot,T)\|_{L^2(\Theta_2)}+\|u(\cdot,T)\|_{L^2(\Theta_3)} \right)\|U_0\|^\alpha.
\end{align*}
Consequently,
\begin{align*}
    &\left(\|u(\cdot,T)\|_{L^2(\Theta_1)}+\left\Vert u(\cdot,T) \right\Vert_{L^2(\mathcal{V}\left\backslash \Gamma \right.)}+\|u_{\Gamma}(\cdot,T)\|_{L^2(\Gamma)}\right)^{\alpha}\|U_0\|\\
    &\leq \mathrm{e}^{(\alpha+1) C\left( 1+\frac{1}{T} \right)}\left( \|u(\cdot,T)\|_{L^2(\Theta_2)}+\|u(\cdot,T)\|_{L^2(\Theta_3)} \right)\|U_0\|^\alpha.
\end{align*}
Since $\overline{\Omega}\subset \Theta\cup \mathcal{V}$ and $\Theta_2\cup \Theta_3\subset \omega$, then
\begin{align*}
    \|U(\cdot,T)\|^\alpha \|U_0\|\leq \mathrm{e}^{(\alpha+1)C\left( 1+\frac{1}{T} \right)}\|u(\cdot,T)\|_{L^2(\omega)}\|U_0\|^{\alpha}.
\end{align*}
Using the fact that $\|U(\cdot,T)\|\leq \|U_0\|$, we obtain
\begin{align*}
    \|U(\cdot,T)\|^{\alpha+1} \leq \mathrm{e}^{(\alpha+1)C\left( 1+\frac{1}{T} \right)}\|u(\cdot,T)\|_{L^2(\omega)}\|U_0\|^{\alpha}.
\end{align*}
Finally,
\begin{align*}
    \|U(\cdot,T)\| \leq \mathrm{e}^{C\left( 1+\frac{1}{T} \right)}\|u(\cdot,T)\|_{L^2(\omega)}^{\frac{1}{\alpha+1}}\|U_0\|^{\frac{\alpha}{\alpha+1}}.
\end{align*}
This completes the proof.

\section{Finite time stabilization}\label{sec4}
Let us start with an estimate that follows from the Weyl asymptotic formula. Let $\Phi_k = \left(\phi_k , \phi_{\Gamma,k} \right) \in \mathbb{L}^2$ be the family of orthonormal eigenfunctions of the operator $-\mathbf{A}$ corresponding to the eigenvalues $(\lambda_k)_{k\geq1}$ given by Lemma \ref{lmsp1}. It holds that $\lambda_k \sim C(\Omega) k^{\frac{2}{n}}$, see \cite[Theorem 2.16]{Ci'16} for a detailed formula. Then there is a positive constant $C=C(\Omega)>0$ such that
\begin{equation}\label{equa34}
   \mathrm{card}\left\lbrace \lambda_{i}\leq \Lambda\right\rbrace=\sum_{\lambda_{i}\leq \Lambda} 1\leq C \Lambda^{\frac{n}{2}}.  
\end{equation}
Define an increasing sequence $\{t_{k}\}$  converging to $T$ by
\begin{equation}\label{equ34}
    t_{k}=T\left( 1-\dfrac{1}{b^k} \right),
\end{equation}
with $b>1$.
Introduce the linear operator $\mathcal{L}_{k}$ by 
\begin{align}\label{eq35}
    \mathcal{L}_{k}:\;  \mathbb{L}^2 &\rightarrow L^2(\omega) \nonumber\\
                    \vartheta\quad&\mapsto \sum _{\lambda_{i}\leq \Lambda_{k}}\left\langle \vartheta, \Phi_{i} \right\rangle  h_{i},
\end{align}
where $\Lambda_{k}:=\lambda_{1}+\dfrac{\eta}{T}\dfrac{b^{2k+1}}{b-1}$ with $\eta>1$ and $h_{i}$  is the impulse control of the following heat equation associated with the eigenfunction $\Phi_{i}$.
\begin{empheq}[left = \empheqlbrace]{alignat=2}
\begin{aligned}\label{equation28}
&\partial_{t} \psi_{i}-\Delta \psi_{i}=0, &&\text { in } \Omega \times(t_{k}, t_{k+1}) \backslash\{\tau_{k}\},\\
&\psi_{i}(\cdot, \tau_{k})=\psi_{i}\left(\cdot, \tau_{k}^{-}\right)+\mathds{1}_{\omega} h_{i}(\cdot,t_{k}), &&\text { in } \Omega,\\
&\partial_{t}\psi_{i\Gamma} - \Delta_{\Gamma} \psi_{i\Gamma} + \partial_{\nu}\psi_{i} =0, &&\text { on } \Gamma \times(t_{k}, t_{k+1})\backslash\{\tau_{k}\}, \\
&\psi_{i\Gamma}(\cdot, \tau_{k})=\psi_{i\Gamma}\left(\cdot, \tau_{k}^{-}\right), &&\text { on } \Gamma,\\
& \psi_{i\Gamma}(x,t) = \psi_{i\mid\Gamma}(x,t), &&\text{ on } \Gamma \times(t_{k}, t_{k+1}),\\
& \left(\psi_{i}(\cdot, 0),\psi_{i\Gamma}(\cdot, 0)\right)=\Phi_{i} && \text{ on } \Omega\times\Gamma.
\end{aligned}
\end{empheq}
Applying Theorem \ref{thm1.2} for $\varepsilon=\dfrac{\mathrm{e}^{-\eta b^{k}}}{\sum\limits_{\lambda_{i}\leq \Lambda_{k}}1}$ we obtain
\begin{equation}\label{equa29}
    \|(\psi_{i},\psi_{i\Gamma})\|^2\leq \dfrac{\mathrm{e}^{-\eta b^{ k}}}{\sum\limits_{\lambda_{i}\leq \Lambda_{k}}1}
\end{equation}
and
\begin{equation}\label{equa30}
\|h_{i}\|_{L^2(\omega)}^2\leq\mathrm{e}^{C_{3}\left(1+\frac{2}{t_{k+1}-t_{k}}\right)}\mathrm{e}^{\frac{C_{3}\sqrt{2}}{\sqrt{t_{k+1}-t_{k}}}\sqrt{\ln \left( \mathrm{e}+\mathrm{e}^{\eta b^{k}}\sum\limits_{\lambda_{i}\leq \Lambda_{k}}1 \right)}}
\end{equation}
\subsection{Proof of Theorem \ref{thm1.4}}
We start by estimating the solution $\Psi := \left(\psi,\psi_{\Gamma}\right)$ of the system \eqref{1.1}
on the interval $\left(t_{k} , t_{k+1} \right)$ with initial data $\displaystyle \Psi\left(t_{k}\right) := \left(\psi,\psi_{\Gamma}\right)\left(t_{k}\right) = \sum_{i \geq 1} a_{i} \Phi_{i} \in \mathbb{L}^2$. To do so, we consider the following two systems
\begin{empheq}[left = \empheqlbrace]{alignat=2}\label{eq29}
\begin{aligned}
&\partial_{t} \upsilon-\Delta \upsilon=0, &&\text { in } \Omega \times(t_{k}, t_{k+1}) \backslash\{\tau_{k}\},\\
&\upsilon(\cdot, \tau_{k})=\upsilon\left(\cdot, \tau_{k}^{-}\right)+\mathds{1}_{\omega} \sum_{\lambda_{i} \leq \Lambda_{k}} a_{i} h_{i}, &&\text { in } \Omega,\\
&\partial_{t}\upsilon_{\Gamma} - \Delta_{\Gamma} \upsilon_{\Gamma} + \partial_{\nu}\upsilon =0, &&\text { on } \Gamma \times(t_{k}, t_{k+1})\backslash\{\tau_{k}\},\\
&\upsilon_{\Gamma}(\cdot, \tau_{k})=\upsilon_{\Gamma}\left(\cdot, \tau_{k}^{-}\right), &&\text { on } \Gamma,\\
& \upsilon_{\Gamma}(x,t) = \upsilon_{\mid\Gamma}(x,t), &&\text{ on } \Gamma \times(t_{k}, t_{k+1}) ,\\
& \left(\upsilon(\cdot, t_{k}),\upsilon_{\Gamma}(\cdot, t_{k})\right)=\sum_{\lambda_{i} \leq \Lambda_{k}} a_{i} \Phi_{i} && \text{ on } \Omega\times\Gamma.
\end{aligned}
\end{empheq}
and
\begin{empheq}[left = \empheqlbrace]{alignat=2}\label{eq28}
\begin{aligned}
&\partial_{t} \varphi-\Delta \varphi=0, &&\text { in } \Omega \times(t_{k}, t_{k+1}),\\
&\partial_{t}\varphi_{\Gamma} - \Delta_{\Gamma} \varphi_{\Gamma} + \partial_{\nu}\varphi =0, &&\text { on } \Gamma \times(t_{k}, t_{k+1}),\\
& \varphi_{\Gamma}(x,t) = \varphi_{\mid\Gamma}(x,t), &&\text{ on } \Gamma \times(t_{k}, t_{k+1}) ,\\
& \left(\varphi(\cdot, t_{k}),\varphi_{\Gamma}(\cdot, t_{k})\right)= \sum_{\lambda_{i} > \Lambda_{k}} a_{i} \Phi_{i}, && \text{ on } \Omega\times\Gamma.
\end{aligned}
\end{empheq}
The solutions of the above systems are given by
\begin{equation}\label{eq31}
\Upsilon := (\upsilon, \upsilon_{\Gamma}) = \sum_{\lambda_{i} \leq \Lambda_{k}} a_{i} \Psi_{i},  
\end{equation}
where $\Psi_{i}:= (\psi_{i},\psi_{i\Gamma})$ is the solution of the system \eqref{equation28} and
\begin{equation}\label{eq32}
\varrho\left(t\right):= (\varphi,\varphi_{\Gamma})\left(t\right)=\sum_{\lambda_{i}>\Lambda_{k}} a_{i} \mathrm{e}^{-\lambda_{i}\left(t-t_{k}\right)} \Phi_{i} 
\end{equation}
using \eqref{equa29} and \eqref{eq31}, we obtain
\begin{equation}\label{eq42}
\left\|\Upsilon\left(t_{k+1}\right)\right\|^{2} \leq \sum_{\lambda_{i} \leq \Lambda_{k}}\mid a_{i}\mid^{2} \frac{\mathrm{e}^{-\eta b^{ k}}}{\sum_{\lambda_{i} \leq \Lambda_{k}} 1} \leq \mathrm{e}^{- \eta b^{ k}}\left\|\Psi\left(t_{k}\right)\right\|^{2},
\end{equation}
on the other hand, by \eqref{eq32} we obtain
\begin{equation*}
\left\|\varrho\left(t_{k+1}\right)\right\|^{2} \leq e^{-2\Lambda_{k}\left(t_{k+1}-t_{k}\right)}\left\|\Psi\left(t_{k}\right)\right\|^{2},
\end{equation*}
using the fact that $ \Lambda_{k}\left(t_{k+1}-t_{k}\right)= \Lambda_{1}\left(t_{k+1}-t_{k}\right) + \eta b^{ k},$ and $\Lambda_{1}\left(t_{k+1}-t_{k}\right)>0$ we obtain 
\begin{equation*}
\left\|\varrho\left(t_{k+1}\right)\right\|^{2} \leq e^{-2\eta b^{ k}}\left\|\Psi\left(t_{k}\right)\right\|^{2}.
\end{equation*}
We have $\Psi\left(\cdot\right) = \Upsilon\left(\cdot\right)  + \varrho\left(\cdot\right)$. This implies that
\begin{equation}
\left\|\Psi\left(t_{k+1}\right)\right\|^{2} \leq\left\|\Upsilon\left(t_{k+1}\right)\right\|^{2}+\left\|\varrho\left(t_{k+1}\right)\right\|^{2} \leq \mathrm{e}^{1- \eta b^{ k}}\left\|\Psi\left(t_{k}\right)\right\|^{2}.
\end{equation}
By induction for any $k \geq 1$,
\begin{equation}\label{equ46}
    \left\|\Psi\left(t_{k}\right)\right\|^{2} \leq \mathrm{e}^{k- \eta b^{ k}}\left\|\Psi\left(t_{0}\right)\right\|^{2},
\end{equation}
Next, we estimate the control function $\mathcal{L}_{k}$ explicitly given in \eqref{eq35} associated with $\Psi$.
\begin{equation}\label{eq45}
\begin{aligned}
\left\|\mathcal{L}_{k}\left(\psi\left(t_{k}\right)\right)\right\|_{\omega}^{2} &=\left\| \sum_{\lambda_{i}\leq \Lambda_{k}}  a_{i} h_{i}\right\|_{\omega}^{2} \leq \int_{\omega}\left(\sum_{\lambda_{i} \leq \Lambda_{k}}\mid a_{i}\mid \mid h_{i} \mid\right)^{2} \\
&\leq \sum_{\lambda_{i} \leq \Lambda_{k}}\mid a_{i}\mid^{2} \sum_{\lambda_{i} \leq \Lambda_{k}}\left\|h_{i}\right\|_{\omega}^{2}\\
& \leq\left\|\Psi\left(t_{k}\right)\right\|^{2} \sum_{\lambda_{i} \leq \Lambda_{k}}\mathrm{e}^{C_{3}\left(1+\frac{2}{t_{k+1}-t_{k}}\right)}\mathrm{e}^{\frac{\sqrt{2}C_{3}}{\sqrt{t_{k+1}-t_{k}}}\sqrt{\ln \left( \mathrm{e}+ \mathrm{e}^{\eta b^{k}}\sum\limits_{\lambda_{i}\leq \Lambda_{k}}1 \right)}}\\
& \leq\left\|\Psi\left(t_{k}\right)\right\|^{2} \sum_{\lambda_{i} \leq \Lambda_{k}}\mathrm{e}^{C_{3}\left(1+\frac{2}{t_{k+1}-t_{k}}\right)}\mathrm{e}^{\frac{2 C_{3}}{\sqrt{t_{k+1}-t_{k}}}\sqrt{\ln \left( \mathrm{e}^{\eta b^{k}}\sum\limits_{\lambda_{i}\leq \Lambda_{k}}1 \right)}}
\end{aligned}
\end{equation}
In the third line we used \eqref{equa30}. On the other hand, by Young's inequality we obtain
\begin{equation}\label{eq46}
    \frac{2 C_{3}}{\sqrt{t_{k+1}-t_{k}}}\sqrt{\ln \left( \mathrm{e}^{\eta b^{k}}\sum\limits_{\lambda_{i}\leq \Lambda_{k}}1 \right)} \leq \frac{2 C_{3}^{2}}{t_{k+1}-t_{k}} + \frac{1}{2} \ln \left(\mathrm{e}^{\eta b^{k}}\sum_{\lambda_{i}\leq \Lambda_{k}}1 \right).
\end{equation}
Thus,
\begin{equation}\label{eq48}
\begin{aligned}
\left\|\mathcal{L}_{k}\left(\psi\left(t_{k}\right)\right)\right\|_{\omega}^{2}& \leq\left\|\Psi\left(t_{k}\right)\right\|^{2} \mathrm{e}^{C_{3}\left(1+\frac{2}{t_{k+1}-t_{k}}\right)}\mathrm{e}^{ \frac{2 C_{3}^{2}}{t_{k+1}-t_{k}} } \mathrm{e}^{ \frac{1}{2} \ln \left(\mathrm{e}^{\eta b^{k}}\sum_{\lambda_{i}\leq \Lambda_{k}}1 \right)} \sum_{\lambda_{i} \leq \Lambda_{k}} 1\\
&\leq\left\|\Psi\left(t_{k}\right)\right\|^{2} \mathrm{e}^{C_{3}\left(1+\frac{2}{t_{k+1}-t_{k}}\right)}\mathrm{e}^{ \frac{2 C_{3}^{2}}{t_{k+1}-t_{k}} } \mathrm{e}^{\frac{1}{2}\eta b^{k}} \left(\sum_{\lambda_{i} \leq \Lambda_{k}} 1\right)^{\frac{3}{2}}.
\end{aligned}
\end{equation}
using \eqref{equa34}, \eqref{equ34}, \eqref{equ46} and \eqref{eq48}, we obtain
\begin{equation}\label{eq51}
\begin{aligned}
\left\|\mathcal{L}_{k}\left(\psi\left(t_{k}\right)\right)\right\|_{\omega}^{2}& \leq \mathrm{e}^{k- \eta b^{ k}}\left\|\Psi\left(t_{0}\right)\right\|^{2} \mathrm{e}^{C_{3}\left(1+\frac{2}{T} \frac{b^{k+1}}{b-1}\right)}\mathrm{e}^{ \frac{2 C_{3}^{2}}{T} \frac{b^{k+1}}{b-1} } \mathrm{e}^{\frac{1}{2}\eta b^{k}}\left( C \Lambda_{k}^{\frac{n}{2}}\right)^{\frac{3}{2}}\\
&\leq \mathrm{e}^{k- \frac{1}{2}\eta b^{ k}}\left\|\Psi\left(t_{0}\right)\right\|^{2} \mathrm{e}^{C_{3}}\mathrm{e}^{\left(\frac{2}{T} \frac{b^{k+1}}{b-1}\right)\left(C_{3}+C_{3}^{2}\right) } \left( C\left(\lambda_{1}+\dfrac{\eta}{T}\dfrac{b^{2k+1}}{b-1} \right)^{\frac{n}{2}}\right)^{\frac{3}{2}}.
\end{aligned}
\end{equation}
Next, we choose $\eta >1$ as follows
\begin{equation*}
    \eta = 1+ 4 \left(\frac{2}{T} \frac{b}{b-1} \left(C_{3}+C_{3}^{2} \right)\right),
\end{equation*}
which implies that
\begin{equation*}
    -\frac{1}{2} \eta b^{ k}+\left(C_{3}+ C_{3}^{2}\right)\left(\frac{2}{T} \frac{b^{k+1}}{b-1}\right) \leq-\frac{1}{4} \eta b^{ k},
\end{equation*}
Since $b>1$, we obtain
\begin{equation*}
\begin{aligned}
\left\|\mathcal{L}_{k}\left(\psi\left(t_{k}\right)\right)\right\|_{\omega}^{2}\leq \mathrm{e}^{k- \frac{1}{4}\eta b^{\beta k}} \mathrm{e}^{C_{3}} \left( C\left(\lambda_{1}+\dfrac{\eta}{T}\dfrac{b}{b-1} \right)^{\frac{n}{2}}\right)^{\frac{3}{2}} b^{\frac{3nk}{2}} \left\|\Psi\left(t_{0}\right)\right\|^{2},
\end{aligned}
\end{equation*}
using the fact that 
\begin{equation*}
    b^{\frac{3 n k}{2}} \leq  \left( \frac{12n}{\eta}\right)^{\frac{3n}{2}} \mathrm{e}^{\frac{1}{8}\eta b^{k} },
\end{equation*}
we obtain that for any $k\geq 1$,
\begin{equation*}
\left\|\mathcal{L}_{k}\left(\psi\left(t_{k}\right)\right)\right\|_{\omega}^{2}\leq C_{4}\mathrm{e}^{k- \frac{1}{8}\eta b^{\beta k}} \left\|\Psi\left(t_{0}\right)\right\|^{2},
\end{equation*}
such that $C_{4}:=  \mathrm{e}^{C_{3}} \left( \frac{12n}{\eta}\right)^{\frac{3n}{2}} \left( C\left(\lambda_{1}+\dfrac{\eta}{T}\dfrac{b}{b-1} \right)^{\frac{n}{2}}\right)^{\frac{3}{2}}.$

For all $t\geq 0$, there exist $k>1$ such that $t \in [t_{k}, t_{k+1}]$. To reach our final result, we distinguish four cases:

\noindent If $t \in [t_{0},\tau_{0})$, then
\begin{equation*}
     \left\|\Psi\left(t\right)\right\|^{2} \leq \left\|\Psi\left(t_{0}\right)\right\|^{2};
\end{equation*}
If $t \in [\tau_{0}, t_{1})$, then
\begin{equation}
\begin{aligned}
 \left\|\Psi\left(t\right)\right\|^{2} &\leq \left\|\Psi\left(\tau_{0}\right)\right\|^{2}= \left\|\Psi\left(\tau_{0}^{-}\right) +\mathds{1}_{\omega} \mathcal{L}_{0}(\psi(\cdot,t_{0})) \right\|^{2}\\
 &\leq 2 \left\|\Psi\left(\tau_{0}^{-}\right)\right\|^{2} + 2 \left\| \mathcal{L}_{0}(\psi(\cdot,t_{0})) \right\|^{2}\\
 &\leq \left(2 + 2\left\| \mathcal{L}_{0} \right\|^{2} \right) \left\|\Psi\left(t_{0}\right)\right\|^{2};
\end{aligned}
\end{equation}
If $k \geq 1$ and $t \in [t_{k},\tau_{k})$, then
\begin{equation*}
     \left\|\Psi\left(t\right)\right\|^{2} \leq \left\|\Psi\left(t_{k}\right)\right\|^{2}\leq \mathrm{e}^{k- \eta b^{ k}}\left\|\Psi\left(t_{0}\right)\right\|^{2}.
\end{equation*}
If $k \geq 1$ and $t \in [\tau_{k}, t_{k+1})$, then
\begin{equation}\label{equ54}
\begin{aligned}
 \left\|\Psi\left(t\right)\right\|^{2} &\leq \left\|\Psi\left(\tau_{k}\right)\right\|^{2}= \left\|\Psi\left(\tau_{k}^{-}\right) +\mathds{1}_{\omega} \mathcal{L}_{k}(\psi(\cdot,t_{k})) \right\|^{2}\\
 &\leq 2 \left\|\Psi\left(\tau_{k}^{-}\right)\right\|^{2} + 2 \left\| \mathcal{L}_{k}(\psi(\cdot,t_{k})) \right\|^{2}\\
 &\leq 2 \mathrm{e}^{k- \eta b^{ k}}\left\|\Psi\left(t_{0}\right)\right\|^{2} + 2 C_{4}\mathrm{e}^{k- \frac{1}{8}\eta b^{ k}} \left\|\Psi\left(t_{0}\right)\right\|^{2}\\
  &\leq 2 \left(1+C_{4} \right)\mathrm{e}^{k- \frac{1}{8}\eta b^{ k}} \left\|\Psi\left(t_{0}\right)\right\|^{2}.
\end{aligned}
\end{equation} 
From \eqref{equ54} and by choosing $b = \mathrm{e}^{ \frac{16}{\eta}}$, we obtain
\begin{equation*}
\begin{aligned}
    \left\|\Psi\left(t\right)\right\|^{2} &\leq 2 \left(1+C_{4} \right)\mathrm{e}^{-\frac{1}{16}\eta b^{ k}} \left\|\Psi\left(t_{0}\right)\right\|^{2}\\
    &\leq 2 \left(1+C_{4}+\left\| \mathcal{L}_{0} \right\|^{2} \right)\mathrm{e}^{-\frac{1}{16}\eta b^{ k}} \left\|\Psi\left(t_{0}\right)\right\|^{2}.
\end{aligned}
\end{equation*}
Consequently, for any $t \in \left[t_{k},t_{k+1} \right]$
\begin{equation}
\begin{aligned}
    \left\|\Psi\left(t\right)\right\|^{2}
     \leq C \mathrm{e}^{-\frac{1}{16}\eta b^{ k}} \left\|\Psi\left(t_{0}\right)\right\|^{2}
\end{aligned}
\end{equation}
with $C:= 2 \left(1+C_{4}+\left\| \mathcal{L}_{0} \right\|^{2} \right) \mathrm{e}^{\frac{1}{16}\eta}$, and
\begin{equation*}
    b^{k} \leq \frac{T}{T-t} \leq b^{k+1}.
\end{equation*}
Then
\begin{equation*}
     \mathrm{e}^{-\frac{1}{16}\eta b^{ k}} \leq \mathrm{e}^{-\frac{1}{16}\frac{\eta T}{b(T-t)} } ,
\end{equation*}
which implies that
\begin{equation*}
\|\Psi(t)\| \leq C \mathrm{e}^{-\frac{1}{K}\left(\frac{T}{T-t}\right)} \left\|\Psi_{0}\right\| \text { for any } 0 \leq t<T,
\end{equation*}
with $K := \frac{16 b}{\eta} $. 
This provides the desired inequality.

\section{Conclusions and possible extensions}
In this work, we have dealt with the impulse null controllability of the heat equation with dynamic boundary conditions. First, we have generalized the main result in \citep{CGMZ'22} consisting on proving a logarithmic convexity estimate in a bounded convex domain. Here we have considered a more general case of $C^2$-domains by proving some local estimates. Then an explicit estimate of the exponential decay of the solution via impulse controls has been established, and the impulse null controllability has been obtained as a direct consequence.

To the best of the authors knowledge, there is no work in the literature dealing with the numerical computation of the impulse null controls for the heat equation even with static boundary conditions. It would be of much interest to investigate this problem. Furthermore, it should be noted that we have proved the approximate impulsive null controllability of the same system using only one pulse arbitrarily located in the time interval \citep{CGMZ'22}, in this work we have been obliged to consider a sequence of well-chosen pulses over the time horizon to establish the impulse null controllability. This raises the question whether it is possible to reach the null controllability from a single pulse. Such a result will generalize several works in the literature that deal with the null controllability of the heat equation by a control acting on the whole time interval.
\section*{Data availability statement}
The authors state that no data sharing is applicable to this article because no data sets were generated or analyzed during the current study.

\end{document}